\documentclass[reqno,12pt]{amsart}

\usepackage[margin=1in]{geometry}
\usepackage{amsmath,amssymb,amsthm}
\usepackage[hidelinks]{hyperref}
\usepackage{microtype}

\newcommand{\R}{\mathbb R}
\newcommand{\ind}{\mathbf 1}
\newcommand{\relint}{\operatorname{relint}}
\DeclareMathOperator{\length}{length}

\newtheorem{theorem}{Theorem}
\newtheorem{lemma}{Lemma}

\newtheorem{corollary}{Corollary}
\theoremstyle{remark}
\newtheorem*{remark}{Remark}

\title{Polyhedral Subspaces of $L_{p}$ and Polars of Zonotopes}
\author[V.~Yaskin]{Vladyslav Yaskin}
\address{Department of Mathematical \& Statistical Sciences, University of Alberta, Edmonton, Alberta, T6G 2G1, Canada}
\email{yaskin@ualberta.ca}

\subjclass[2020]{Primary 52A20, 52A21, 46B04.}
\keywords{Embedding in $L_p$, $k$-intersection body, Fourier transform, zonotope}
\date{}

\begin{document}

\begin{abstract} Let $K$ be an origin-symmetric full-dimensional convex polytope in $\mathbb R^n$, $n\ge 3$. We show that, for $-n+3<p<1$, the normed space  $(\mathbb R^n,\|\cdot\|_K)$ embeds in $L_p$ if and only if it embeds in $L_1$. The case of $p\le 0$ is understood in the generalized sense. 
In particular, an origin-symmetric full-dimensional convex
polytope $K\subset\R^n$, $n\ge 5$, is   an intersection body if and only if  $K$ is the polar of a  zonotope.
\end{abstract}

\maketitle
 
\section{Introduction}

The problem of characterizing finite-dimensional normed spaces that embed isometrically into $L_p$ has a long history, going back to the work of P.~L\'evy.  
For $p>0$, let $\mathcal I_p$ denote the class of unit
balls of $n$-dimensional normed spaces  $(\mathbb R^n,\|\cdot\|)$ that embed isometrically in
$L_p$.  The notion of embedding into $L_p$ admits natural extensions to $-n<p<0$, introduced by Koldobsky \cite{Koldobsky1999}, and to the  case $p=0$, introduced by Kalton, Koldobsky, Yaskin, and Yaskina \cite{KaltonKoldobskyYaskinYaskina}. We use the same notation $\mathcal I_p$ for the corresponding classes. Precise definitions and the Fourier-analytic characterizations of these classes are given in Section~\ref{sec:definitions}.

The classical inclusion theorem of Bretagnolle, Dacunha-Castelle, and Krivine \cite{BDK}, together with its extensions to non-positive parameters \cite{Koldobsky1999}, \cite{KaltonKoldobskyYaskinYaskina}, implies that 
\begin{align}\label{I1inIp}
	\mathcal I_1\subset \mathcal I_p, \qquad -n<p<1. 
	\end{align}
In general, these inclusions are strict. In fact, for every
$-n<p<1$ there exist finite-dimensional normed spaces that embed
 into $L_p$ but not into $L_1$; see
\cite{Koldobsky1996}, \cite{Koldobsky1999}, \cite{KaltonKoldobsky}, \cite{KaltonKoldobskyYaskinYaskina}.
 
The purpose of this paper is to show that the situation changes
drastically for polyhedral normed spaces. Recall that  a polytope is the convex hull of finitely many points.  

\begin{theorem}\label{thm:main-intro}
	Let $K$ be an  origin-symmetric full-dimensional polytope in $\mathbb R^n$,
	$n\geq 3$.  If $K\in \mathcal I_p$, for  some
	\( p\in (
	-n+3,1)
	\),  then $K\in\mathcal I_1$.
\end{theorem}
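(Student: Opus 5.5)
We work with the Fourier-analytic characterization of Section~\ref{sec:definitions}. For $p\in(-n,0)$, $p\neq 0$, $K\in\mathcal I_p$ exactly when the distribution $(\|x\|_K^{p})^\wedge$ is a positive distribution off the origin. For $0<p<1$ the sign reverses, so the condition is that $-(\|x\|_K^{p})^\wedge$ is positive off the origin. For $p=0$ we use the analogous statement for $(\log\|x\|_K)^\wedge$. The endpoint $p=1$ is Lévy's criterion: $K\in\mathcal I_1$ if and only if $\|\cdot\|_K$ is the support function of a zonoid. Since $K$ is a polytope, $\|\cdot\|_K$ is piecewise linear. Write $\|x\|_K=\max_i |\langle x,u_i\rangle|$, where the $\pm u_i$ are the vertices of the polar polytope $K^\circ$. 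The goal is then to show that $K^\circ$ is a zonotope, i.e.\ that $\|x\|_K=\sum_j c_j|\langle x,v_j\rangle|$ with $c_j\ge0$. This clearly implies $K\in\mathcal I_1$.

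\textbf{Step 1: singular part of the Fourier transform.} Because $\|x\|_K^p$ is homogeneous of degree $p$ and piecewise smooth, $(\|x\|_K^p)^\wedge$ is homogeneous of degree $-n-p$. It is smooth in directions $\xi$ that are not normal to any face of the fan of $K^\circ$. Its singular behaviour is concentrated on the normals to the $(n-1)$-dimensional cones of the normal fan, i.e.\ on the directions of edges of $K^\circ$ (equivalently, facets of the fan cones where $\|\cdot\|_K$ has a kink). Near a kink hyperplane $H=\xi_0^\perp$, $\|x\|_K$ behaves like $a\,|\langle x,\xi_0\rangle|$ plus a linear part. Using the explicit formula for $(|\langle x,\xi_0\rangle|\,\cdot)^\wedge$, I would compute the leading term of $(\|x\|_K^p)^\wedge$ near the ray through $\xi_0$. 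Expanding $\|x\|_K^p$ in the jump across the kink and applying the Fourier transform in polar/slice form, as in the Fourier formulas for sections, this term comes out as a singular measure or derivative concentrated on that line. Its coefficient is an integral over the $(n-1)$-dimensional ``ridge'' of the jump in gradient, weighted by $|x|^{p-1}$ and by a kernel involving $|\langle x,\eta\rangle|^{-n+1-p}$-type factors. The restriction $p>-n+3$ should be exactly what keeps these integrals over $(n-2)$- and lower-dimensional pieces of the ridge locally integrable, so that the leading singular term is well defined and dominates.

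\textbf{Step 2: positivity forces additivity.} The positivity hypothesis on $(\|x\|_K^p)^\wedge$, with the appropriate sign, forces the coefficient of each such singular term to have a definite sign. I expect a more refined consequence to be usable: testing against functions concentrated near lower-dimensional faces (codimension-2 ridges, where several kink hyperplanes meet) should force compatibility conditions. The model example is a 2-dimensional polygonal section. I would show these conditions are exactly the statement that every 2-face of $K^\circ$ is centrally symmetric. This uses the classical characterization that a polytope all of whose 2-faces are centrally symmetric is a zonotope (Bolker/Shephard--McMullen). Alternatively, one can pass to the $L_1$ route. Positivity of the singular measure identifies the generalized cosine transform of a nonnegative measure $\mu$ supported on finitely many directions $v_j$, giving $\|x\|_K=\sum c_j|\langle x,v_j\rangle|$ up to a linear (hence odd, thus zero) correction. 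This identification uses injectivity of the Fourier transform on even homogeneous distributions.

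\textbf{Main obstacle.} The hardest step is the local analysis in Step 1–2 near codimension-2 and lower faces, where the singularities of several kink hyperplanes interact. One must extract a sign condition strong enough to force central symmetry of the 2-faces, rather than only positivity of edge weights. I would first attempt it in the lowest-dimensional model, $n=3$ with $p\in(0,1)$, by computing $(\|x\|_K^p)^\wedge$ restricted to a neighbourhood of a vertex ray of the fan of $K^\circ$. I would then reduce the general case to this model by slicing: embedding in $L_p$ passes to sections, and a central section of $K$ by a 3-space corresponds to a projection of $K^\circ$. The slicing is also where the shift of parameter from $p$ to $p-n+3$ comes from, which explains the range $p>-n+3$. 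The final step is to invoke the characterization of zonotopes via their 3-dimensional projections to conclude.
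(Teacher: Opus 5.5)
\textbf{There is a genuine gap.} Your goal is the right one. Writing $M:=K^\circ$, so that $\|\cdot\|_K=h_M$, you want to show that every $2$-face of $M$ is centrally symmetric and then invoke the classical characterization of zonotopes. You are also right to look for the relevant Fourier singularities at the edge directions of $M$, i.e.\ at the normals to the walls of its normal fan. But the step that carries the whole proof is only announced (``should force compatibility conditions''), and the two concrete mechanisms you offer for it do not work.

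The $L_1$ route confuses $\|\cdot\|_K^p$ with $\|\cdot\|_K$. The hypothesis concerns $\widehat{\|\cdot\|_K^p}$, which is not supported on finitely many lines even when $M$ is a zonotope (already $(|x_1|+|x_2|)^p$ fails this). Passing from a representation of $\|x\|_K^p$ to one of $\|x\|_K$ is exactly what the theorem asserts, so injectivity of the Fourier transform cannot supply it.

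The slicing reduction also fails. Embedding in $L_p$ passes to subspaces with the same $p$, so there is no shift of parameter. Moreover, for $n\ge4$ and $-n+3<p\le0$ a three-dimensional section carries no information at all, since every origin-symmetric convex body in $\R^3$ lies in $\mathcal I_p$ for $-3<p\le0$. So slicing could at best cover $0<p<1$, and even there it only moves the unresolved local analysis to $n=3$.

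Finally, the object you want to extract at an edge direction is not a scalar coefficient or a measure on a line. Collapsing it to a number is exactly what leaves you with mere positivity of edge weights, which you rightly note is too weak.

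\textbf{The missing idea} is a two-level blow-up in which positivity is turned into an equality by a sign reversal.

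First level. Fix an edge direction $g$ of $M$ and put $F_p=\frac1p h_M^p$. Compute $\partial_g^2F_p$ explicitly: it consists of wall terms $l_{ij}\langle g_{ij},g\rangle^2h_M^{p-1}$ on the normal cones of edges, plus absolutely continuous terms. Now test $\widehat{\partial_g^2F_p}=\langle\cdot,g\rangle^2(-\widehat{F_p})$ against $\chi(\tau-t)\psi(\eta)$ and let $t\to\infty$. The Riemann--Lebesgue lemma kills everything except the walls lying in $g^\perp$. Hence $T_g=\sum_{F_{ij}\parallel g}l_{ij}h_M^{p-1}\ind_{E_{ij}}$ is positive definite on $g^\perp$. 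On the vertex cones of $Q=P_{g^\perp}M$ it equals $\lambda_ih_Q^{p-1}$, where $\lambda_i$ is the length of the fibre of $M$ over the vertex $a_i$ of $Q$.

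Second level. Let $u$ be an edge direction of $Q$, and test $\widehat{T_g}$ against $\langle\cdot,u\rangle\Phi_t$. This pairing is $\ge0$ for $t>1$ and $\le0$ for $t<-1$. As $|t|\to\infty$ it converges to $-i\langle J_u,\widehat\psi\rangle$, where $J_u=\sum_{[a_i,a_j]\parallel u}(\lambda_j-\lambda_i)h_Q^{p-1}\ind_{C_{ij}}$ is the jump part of $\partial_uT_g$. Hence $J_u=0$ and all $\lambda_i$ coincide. Equal fibre lengths then force opposite parallel edges of equal length in every $2$-face, which gives central symmetry.

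This is also where the condition $p>-n+3$ actually enters, not through slicing. The distribution $J_u$ lives on the $(n-2)$-dimensional walls $C_{ij}\subset g^\perp\cap u^\perp$. It is well defined only when $h_Q^{p-1}$ is locally integrable there, i.e.\ when $1-p<n-2$.
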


\begin{remark}
	Every origin-symmetric convex body in $\mathbb R^n$ belongs to $\mathcal I_p$ for all $p\in (-n,-n+3]$; see  \cite[Corollary 4.9]{KoldobskyBook} for the case $p<0$ and \cite{KaltonKoldobskyYaskinYaskina} for the case $p=0$. On the other hand, the cube $B^n_\infty$ does not belong to $\mathcal I_1$ in dimensions $n\ge 3$ (see \cite{Bolker}). Hence,  the lower endpoint $p=-n+3$ in Theorem \ref{thm:main-intro} is sharp.
\end{remark}

Together with the reverse inclusion
\eqref{I1inIp},
 Theorem~\ref{thm:main-intro} gives a complete classification
of the polyhedral members of $\mathcal I_p$ for 	$ p\in (
-n+3,1)$:
\[
\mathcal I_p\cap\mathcal P_n
=
\mathcal I_1\cap\mathcal P_n,
\qquad -n+3<p<1,
\]
where $\mathcal P_n$ denotes the class of origin-symmetric
full-dimensional polytopes in $\mathbb R^n$.   

There is a natural convex-geometric interpretation of  this
statement.  Recall  that a {zonotope} is a finite Minkowski
sum of line segments, and a {zonoid} is a Hausdorff limit of
zonotopes.  A classical characterization of zonoids shows that a
finite-dimensional normed space embeds isometrically in $L_1$ if and
only if the polar of its unit ball is a zonoid; see, for example, \cite{Bolker}.  Moreover, a polytope
which is a zonoid is necessarily a zonotope; see \cite[Corollary~3.5.7]{Schneider}.  Consequently, for a
polyhedral normed space with unit ball $K$,
\[
K\in\mathcal I_1
\quad\Longleftrightarrow\quad
K^\circ\ \text{is a zonotope}.
\]
Thus Theorem~\ref{thm:main-intro} can be  stated in the following  geometric form:
\[
K\in\mathcal I_p\cap \mathcal P_n \qquad (-n+3<p\leq1)
\quad\Longleftrightarrow\quad
K^\circ\ \text{is a zonotope}.
\]

For negative integers, the classes $\mathcal I_{p}$ can be defined in the language of  intersection bodies. In this setting, Theorem \ref{thm:main-intro} yields the following. If $1\leq k<n-3$, then
\[
K\text{ is a polytopal $k$-intersection body}
\quad\Longleftrightarrow\quad
K^\circ\text{ is a zonotope}.
\]
In particular, for $n\geq5$, the polar of every polytopal intersection
body is a zonotope.

For a linear subspace
\(H\subset\mathbb R^n\), let \(\mathcal I_p(H)\) denote the
class of convex bodies in  \(H\) that are unit balls of finite-dimensional normed spaces that embed in $L_p$, $p>-\mathrm{dim}\, H$. If $K$ is an origin-symmetric convex body in $\mathbb R^n$ that belongs to $\mathcal I_p$ for $p>-n+1$, then $K\cap H$ belongs to $\mathcal I_p(H)$ for each $(n-1)$-dimensional subspace $H\subset \mathbb R^n$; see the discussion in Section \ref{sec:section-lifting}.  
 We show that in the polytopal setting the reverse implication also holds. Let $n\ge 4$ and $-n+4<p\leq1$.	If \(K\subset\mathbb R^n\)  is an origin-symmetric
full-dimensional polytope, such that 	\(
K\cap H\in\mathcal I_p(H)
\), 
for every $(n-1)$-dimensional subspace \(H\subset\mathbb R^n\), then \(K^\circ\) is a zonotope, and in particular, 
	\(
	K\in\mathcal I_p.
	\)

\section{Definitions and notation}\label{sec:definitions}

One of the main tools in this paper is the Fourier transform of distributions. We recall only the facts that will be needed below and refer the reader to \cite{GelfandShilov,KoldobskyBook} for further background.

Let $\mathcal S(\mathbb R^n)$ denote the Schwartz space of rapidly decreasing infinitely differentiable complex-valued functions on $\mathbb R^n$. The elements of this space are called test functions. By
$\mathcal S'(\mathbb R^n)$ we denote  the space of tempered distributions, i.e., continuous linear functionals on $\mathcal S(\mathbb R^n)$. For $\varphi\in \mathcal S(\mathbb R^n)$
its Fourier transform is defined by
\[
\widehat{\varphi}(\xi)
=
\int_{\mathbb R^n}
e^{-i\langle x,\xi\rangle}\varphi(x)\,dx.
\]
It is known that $\widehat\varphi$ is also a test function. 
The Fourier transform of a distribution $f\in\mathcal S'(\mathbb R^n)$ is a distribution $\widehat f$ defined by its action
on test functions as follows:
$$\langle \widehat f, \varphi\rangle =  \langle  f, \widehat \varphi\rangle, \qquad \mbox{for all } \varphi \in\mathcal S(\mathbb R^n).$$

For $g\in\R^n$, write
\[
\partial_g=\sum_{m=1}^n g_m\partial_m,
\]
where $\partial_m$ stands for the partial derivative with respect to the $m$th coordinate.

The derivative of a distribution $f$ is defined by
\[
\langle\partial_gf,\varphi\rangle
=-\langle f,\partial_g\varphi\rangle, \qquad \varphi \in\mathcal S(\mathbb R^n).
\]
For a test function $\varphi$  we have
\[\partial_g \widehat\varphi = - i \widehat{\langle \cdot, g \rangle \varphi}.
\]

We say that a distribution $f\in\mathcal S'(\mathbb R^n)$ is positive definite if its Fourier transform is a positive distribution, that is $$\langle \widehat f, \varphi\rangle\ge 0,$$
for every non-negative test function $\varphi$.

 We say that a distribution $f$ is positive outside the origin if
 \[
 \langle f,\varphi\rangle\geq0
 \]
 for every non-negative test function 
 $\varphi$ supported outside the origin.

For $p>0$, we say that a finite-dimensional normed space
$(\mathbb R^n,\|\cdot\|)$ {embeds isometrically in $L_p$} if
there is a linear isometry from this space into some $L_p$ space.
By  the classical P. L\'evy representation (see \cite[Lemma 6.4]{KoldobskyBook}), $(\mathbb R^n,\|\cdot\|)$ embeds isometrically in $L_p$, $p>0$, if and only if there is a finite even Borel measure $\mu$ on the unit sphere $S^{n-1}$
such that
\begin{align}\label{eq:p-integral-representation}
\|x\|^p
=
\int_{S^{n-1}}
|\langle x,\theta\rangle|^p\,d\mu(\theta),
\qquad x\in\mathbb R^n.
\end{align}

For $p>0$, that is not an even integer, Koldobsky \cite{Koldobsky1992} gave the following Fourier-analytic version of this characterization: a finite-dimensional normed space
$(\mathbb R^n,\|\cdot\|)$ embeds in $L_p$ if and only if  $ \Gamma\left(-\frac p2\right) \widehat{\|\cdot\|^p} $ is a positive distribution outside the origin.  In particular, if $0<p<1$, then $(\mathbb R^n,\|\cdot\|)$ embeds in $L_p$ if and only if  $ - \widehat{\|\cdot\|^p} $ is a positive distribution outside the origin.

The notion of embedding in $L_p$ with $-n<p<0$ was introduced by Koldobsky \cite{Koldobsky1999} as an analytic extension of the  P. L\'evy representation. He also proved the following Fourier characterization: a finite-dimensional normed space $(\mathbb R^n,\|\cdot\|)$ embeds in $L_p$ for $-n<p<0$ if and only if  
$\|x\|^p$ is a positive definite distribution.

The limiting case $p=0$ was introduced in \cite{KaltonKoldobskyYaskinYaskina}. A finite-dimensional normed space $(\mathbb R^n,\|\cdot\|)$ is said to embed in $L_0$ if there exist a probability measure $\mu$ on $S^{n-1}$ and a constant $C\in\mathbb R$ such that \begin{equation*}
	 \log\|x\| = \int_{S^{n-1}} \log|\langle x,\theta\rangle|\,d\mu(\theta)+C, \qquad x\neq0. 
\end{equation*} 
Equivalently, $(\mathbb R^n,\|\cdot\|)$ embeds in $L_0$ if  $-\widehat{\log\|\cdot\|}$  is a positive distribution outside the origin; see \cite{KaltonKoldobskyYaskinYaskina}.

Throughout the paper, $\mathcal I_p$ denotes the class of
origin-symmetric convex bodies in $\mathbb R^n$ that are unit balls
of finite-dimensional spaces embedding in $L_p$ in the corresponding
sense.

The classes  $\mathcal I_p$ are known to obey the following inclusions. 
For
\(
0<q<p\leq2,
\)
one has  
\(
\mathcal I_p\subset\mathcal I_q
\);  see \cite{BDK}. 
In particular,
\(
\mathcal I_1\subset\mathcal I_q\),
\( 0<q<1.
\) Koldobsky \cite{Koldobsky1999} extended this result to negative $q$: Every
$n$-dimensional subspace of $L_p$, $0 < p \le 2$, embeds in $L_q$ for every $-n < q < 0$. The limiting case of $L_0$ was treated in \cite{KaltonKoldobskyYaskinYaskina}.
On the other hand, there was a series of works with the aim of showing that these classes are different; see \cite{Koldobsky1996}, \cite{KaltonKoldobsky}, \cite{Milman2006}, \cite{Schlieper}, \cite{Yaskin2008}, \cite{KaltonZymonopoulou}.
In particular, combining the results obtained in \cite{Koldobsky1996}, \cite{Koldobsky1999}, \cite{KaltonKoldobsky}, 	\cite{KaltonKoldobskyYaskinYaskina}, we see that for every
$-n<p<1$ there exist finite-dimensional normed spaces that embed
 into $L_p$ but not into $L_1$.

We now turn to the geometric notions used in the paper. For further
background, see  \cite{Gardner}, \cite{KoldobskyBook}, \cite{Schneider}.
We say that $K\subset\mathbb R^n$ is a star body if it is compact, star-shaped with respect to the origin and its  radial function
\[
\rho_K(\theta)
=
\max\{t\geq0:t\theta\in K\},
\qquad \theta\in S^{n-1},
\]
is positive and continuous.  Its Minkowski functional is given by
\[
\|x\|_K
=
\inf\{\lambda>0:x\in\lambda K\}.
\]
It is easy to see that
\[
\rho_K(\theta)=\|\theta\|_K^{-1}.
\]
A convex compact set with nonempty interior is called a convex body. The Minkowski functional $\|\cdot\|_K$ is a norm precisely when $K$ is an origin-symmetric convex body. 

The radial metric on the class of star bodies is defined by \[ d(K,L) = \max_{\theta\in S^{n-1}} |\rho_K(\theta)-\rho_L(\theta)|. \]

Following Lutwak \cite{Lutwak}, an origin-symmetric star body $K$ is called the
{intersection body} of an origin-symmetric star body $L$ if
\[
\rho_K(\theta)
=
\operatorname{vol}_{n-1}(L\cap\theta^\perp),
\qquad \theta\in S^{n-1}.
\]
The  closure of the collection of such bodies in the radial metric is called the class
of {intersection bodies}. Intersection bodies played a key role in the solution of the celebrated Busemann-Petty problem; see  \cite[Chapter 1]{KoldobskyBook} for historical details.

 Koldobsky \cite{Koldobsky1999Israel} generalized the notion of  intersection bodies to higher codimensions. Let  $1\leq k<n$ be an integer and let $K$ and $L$ be origin-symmetric star bodies in $\mathbb R^n$. We say that $K$ is a
{$k$-intersection body of $L$} if
\[
\operatorname{vol}_k(K\cap H^\perp)
=
\operatorname{vol}_{n-k}(L\cap H)
\]
for every $(n-k)$-dimensional subspace $H\subset\mathbb R^n$.    The
class of $k$-intersection bodies is obtained by taking radial limits of
bodies arising in this way. The class of $1$-intersection bodies coincides with the class of  intersection bodies.

Koldobsky's Fourier characterization states that
an origin-symmetric star body $K$  is a $k$-intersection body if and only if $
\|\cdot\|_K^{-k}$
  is a positive definite distribution;
see \cite{KoldobskyFunctional,KoldobskyBook}.  Consequently, among
origin-symmetric convex bodies,
\[
K\text{ is a $k$-intersection body}
\quad\Longleftrightarrow\quad
K\in\mathcal I_{-k}.
\]

Throughout the paper, the term ``$k$-intersection body'' is used in this
sense. It should not be confused with the class of generalized
$k$-intersection bodies introduced by G.~Zhang \cite{ZhangSections}.

For sets $A,B\subset\mathbb R^n$, their Minkowski sum is
\[
A+B=\{x+y:x\in A,\ y\in B\}.
\] 
The support function of a convex body $K$ is
\[
h_K(x)=\max_{y\in K}\langle x,y\rangle,
\qquad x\in\mathbb R^n.
\]
For convex bodies $K$ and $L$ we have
\[
h_{K+L}=h_K+h_L.
\]

A {zonotope} is a finite Minkowski sum of line segments.  Every
origin-symmetric zonotope can therefore be written in the form
\[
Z=\sum_{j=1}^m[-v_j,v_j],
\]
where $v_1,\dots,v_m\in\mathbb R^n$, and its support function is
\[
h_Z(x)=\sum_{j=1}^m|\langle x,v_j\rangle|.
\]

A {zonoid} is a Hausdorff limit of zonotopes.  An
origin-symmetric convex body $Z$ is a zonoid if and only if there is a
finite even Borel measure $\mu$ on $S^{n-1}$ such that
\[
h_Z(x)
=
\int_{S^{n-1}}
|\langle x,\theta\rangle|\,d\mu(\theta),
\qquad x\in\mathbb R^n;
\]
see \cite[Theorem~3.5.3]{Schneider}.

A polytope is a zonoid if and only if it is a zonotope \cite[Corollary~3.5.7]{Schneider}.

If $K\subset\mathbb R^n$ is a convex body containing the origin in its
interior, its polar is
\[
K^\circ
=
\{x\in\mathbb R^n:
\langle x,y\rangle\leq1
\text{ for every }y\in K\},
\]
and
\[
\|x\|_{K^\circ}=h_K(x).
\]
Thus, by \eqref{eq:p-integral-representation}, an origin-symmetric convex body $K$ is a zonoid if and only if $K^\circ$
is the unit ball of a finite-dimensional normed space that embeds in  $L_1$.

 \section{Main results}

If $K$ is an origin-symmetric convex body, then $0\in\operatorname{int}K$. Thus, 
 for every $p>-n$, the function $h_K^p$ is locally integrable and defines a
 homogeneous tempered distribution of degree $p$. For $p\neq0$, set
 \[
 F_p(x)=\frac{1}{p}h_K^p(x).
 \]

 An equivalent form of our main result is the following.
 
 \begin{theorem}\label{thm:main}
 	Let $n\geq3$, let $K\subset\R^n$ be an origin-symmetric
 	full-dimensional polytope, and let
 	\[
 	-n+3<p<1,\qquad p\neq0.
 	\]
 	If $-\widehat{F_p}$ is a positive distribution outside the origin, then
 	$K$ is a zonotope.
 \end{theorem}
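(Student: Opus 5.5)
The plan is to use the combinatorial criterion that an origin-symmetric polytope is a zonotope if and only if each of its $2$-faces is centrally symmetric, which can be rephrased as a statement about edges (Schneider, Thm.~3.5.2 and the surrounding discussion). Concretely: for every edge direction $e$ of $K$, the union of the normal cones $N(K,E)$ over all edges $E\parallel e$, each weighted by $\length(E)$, should fill the hyperplane $e^\perp$ with constant weight. Equivalently, in the distributional sense,
\[
\partial_u\partial_v h_K=\sum_{E}\length(E)\,\langle u,\nu_E\rangle\langle v,\nu_E\rangle\,\mathcal H^{n-1}\big|_{N(K,E)},
\]
where $E$ runs over the edges of $K$ and $\nu_E$ is the unit vector along $E$. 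The polytope $K$ is a zonotope exactly when, for each direction $e$, the cones $N(K,E)$ with $E\parallel e$ assemble into the full hyperplane $e^\perp$ with constant density. So I will argue by contradiction. Assume there is an edge direction $e$ for which the weighted cones with normal $e$ do not tile $e^\perp$ uniformly. I will show that $-\widehat{F_p}$ then takes negative values near the line $\R e$, which contradicts the hypothesis of Theorem~\ref{thm:main}.

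The first step is a local formula for $\widehat{F_p}$ away from the origin. Since $h_K$ is piecewise linear with respect to the normal fan, $F_p=h_K^p/p$ is smooth and homogeneous on each maximal cone. Its singular set is the $(n-1)$-skeleton of the fan, that is, the union of the cones $N(K,E)$. Differentiating twice along $u,v$ and using the identity $\partial_g\widehat\varphi=-i\widehat{\langle\cdot,g\rangle\varphi}$, I get that $\langle\xi,u\rangle\langle\xi,v\rangle\widehat{F_p}(\xi)$ equals, up to a constant, the Fourier transform of $h_K^{p-1}\partial_u\partial_v h_K$ plus a term $(p-1)h_K^{p-2}\partial_uh_K\,\partial_vh_K$. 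The second term is a piecewise homogeneous function whose transform is comparatively regular in the relevant region. The first term is a sum of homogeneous densities of degree $p-1$ on the $(n-1)$-dimensional cones $N(K,E)$.

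The Fourier transform of the density on $N(K,E)$ concentrates near the line $\R\nu_E$. Writing $\xi=t\nu_E+\eta$ with $\eta\perp\nu_E$ small, its leading behaviour is governed by the transform, inside $\nu_E^\perp$, of $h_K^{p-1}\ind_{N(K,E)}$. This is a homogeneous function of degree $p-1$ on the hyperplane, restricted to a cone. Summing over all edges parallel to $e$, the leading term near $\R e$ is the $(n-1)$-dimensional Fourier transform of
\[
g=\sum_{E\parallel e}\length(E)\,h_K^{p-1}\ind_{N(K,E)}\quad\text{on } e^\perp .
\]
I will then use that $g$ is a genuine cone-restricted function, with jumps across the $(n-2)$-dimensional cones, precisely when the tiling fails. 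Its transform then has homogeneity degree $-(n-1)-(p-1)=-n-p+2$ in the transverse variable. The condition $p>-n+3$ is what I expect to make this degree exceed the degree contributed by codimension-$2$ (2-face) singularities of the fan, so that this term genuinely dominates and the local analysis is legitimate.

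The main obstacle is to prove that this leading term changes sign, which is where the non-tiling must be used. My first attempt will restrict to a $2$-plane transverse to a codimension-$2$ cone $N(K,F)$ of a non-symmetric $2$-face $F$. In that plane the configuration reduces to a polygon-type computation: the transform of a sum of homogeneous functions on angular sectors, with unequal opposite edges. For such a configuration one can check directly that the sign oscillates with the angle, since a non-symmetric sector combination has a nonvanishing odd part. If positivity of $-\widehat{F_p}$ holds in the full space, then pairing with test functions concentrated near $\R e$ and spread in the transverse directions forces this leading term to be nonnegative. That contradicts the oscillation, so every edge family tiles its hyperplane and $K$ is a zonotope.
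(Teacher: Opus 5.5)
Your first half is sound and is in substance the paper's Steps 1--2. You derive the Hessian formula. You then test against $\chi(\langle\xi,e\rangle-t)\psi$ with $|t|\to\infty$, so that Riemann--Lebesgue removes the full-dimensional terms and every wall not contained in $e^\perp$. This shows that $g=\sum_{E\parallel e}\length(E)\,h_K^{p-1}\ind_{N(K,E)}$ is positive definite on $e^\perp$. Finally, $g=\lambda_i h_Q^{p-1}$ on the vertex cones $C_i$ of $Q=P_{e^\perp}K$, with $\lambda_i$ the fibre lengths, so ``constant weight'' means all $\lambda_i$ are equal. That step only needs $p>-n+2$; the hypothesis $p>-n+3$ plays no role in the domination of the $e$-walls.

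The genuine gap is the step you flag yourself: deducing from $\widehat g\ge0$ that all $\lambda_i$ coincide. ``One can check directly'' is not an argument, and the route you sketch cannot work. The non-symmetric polygon $F$ is visible only near a point $y_0\in\relint N(K,F)$, that is, on the affine plane $y_0+\operatorname{lin}(F-F)$ or after a cut-off near $y_0$. Positive definiteness does not pass to affine planes missing the origin. Multiplication by a cut-off $\phi$ is guaranteed to preserve it only when $\widehat\phi\ge0$, which forces $|\phi(-x)|=|\phi(x)|$, so you can never separate $N(K,F)$ from $N(K,-F)=-N(K,F)$. Restricting $F_p$ to a linear $2$-plane $\Pi$ does preserve positivity when defined, but you then only see $h_K|_\Pi=h_{P_\Pi K}$, the support function of an origin-symmetric polygon; this carries no obstruction, since every two-dimensional normed space embeds in $L_1$. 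Finally, since $K=-K$, both $F_p$ and $g$ are even, so their transforms are real and even, and the ``odd part'' coming from $F$ cancels against that of $-F$ in every even test.

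What is missing is a second descent inside $e^\perp$; this is the paper's Lemma~\ref{lem:no-jumps} with $d=n-1$ and $\alpha=1-p$. For each edge direction $u$ of $Q$ (for instance that of $P_{e^\perp}F$), $\partial_u g$ splits into three parts:
\begin{itemize}
\item a locally integrable function of order $h^{p-2}$;
\item wall terms for the edges of $Q$ not parallel to $u$;
\item the jump distribution $J_u$ on $u^\perp\cap e^\perp$, with density $(\lambda_j-\lambda_i)h^{p-1}$ on the walls $N_Q([a_i,a_j])$, where $[a_i,a_j]\parallel u$ is oriented so that $a_j-a_i$ is a positive multiple of $u$.
\end{itemize}
Here $p>-n+3$ is exactly what makes $h^{p-2}$ integrable on $e^\perp$ and $h^{p-1}$ integrable on these $(n-2)$-dimensional walls.

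Now test $\widehat g\ge0$ against $\langle\xi,u\rangle\chi(\langle\xi,u\rangle-t)\psi(\xi')$. Here $\xi'$ is the component of $\xi$ orthogonal to $u$, $\psi\ge0$ is arbitrary, and $\chi\ge0$ is supported in $(-1,1)$ with integral $1$. This function is $\ge0$ for $t>1$ and $\le0$ for $t<-1$, while by Riemann--Lebesgue the pairing tends to $-i\langle J_u,\widehat\psi\rangle$ at both ends. Hence $\langle J_u,\widehat\psi\rangle=0$ for all $\psi\ge0$, in particular for non-even $\psi$, which is what defeats the antipodal cancellation. So $J_u=0$, $\lambda_i=\lambda_j$ along every $u$-edge, and connectivity of the edge graph of $Q$ makes all $\lambda_i$ equal. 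This is the rigorous form of the sign change you are after. It comes from the factor $\langle\xi,u\rangle$ having opposite signs at the two ends of $\R u$, not from an explicit polygon computation.
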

 
  Theorem~\ref{thm:main} yields the following classification.
 
 \begin{corollary}\label{cor:polyhedral}
 	Let $n\geq3$, let $L\subset\R^n$ be an origin-symmetric
 	full-dimensional polytope, and let $-n+3<p<1$. Then the following are
 	equivalent:
 	\begin{enumerate}
 		\item $L\in\mathcal I_p$;
 		\item $L^\circ$ is a zonotope;
 		\item $L\in\mathcal I_1$.
 	\end{enumerate}
 \end{corollary}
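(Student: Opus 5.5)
We derive Corollary~\ref{cor:polyhedral} from Theorem~\ref{thm:main}, applied to $K=L^\circ$, together with the known inclusion \eqref{I1inIp} and the zonoid characterization of $\mathcal I_1$. The implications to prove are (1)$\Rightarrow$(2)$\Rightarrow$(3)$\Rightarrow$(1).

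\emph{(2)$\Rightarrow$(3).} Suppose $L^\circ$ is a zonotope $\sum_{j=1}^m[-v_j,v_j]$. Since $L$ is an origin-symmetric convex body, $\|x\|_L=h_{L^\circ}(x)=\sum_j|\langle x,v_j\rangle|$. This is a representation of the form \eqref{eq:p-integral-representation} with $p=1$, where $\mu$ is a finite even atomic measure placed at the points $\pm v_j/|v_j|$ with appropriate weights. Hence $L\in\mathcal I_1$.

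\emph{(3)$\Rightarrow$(1).} This is the inclusion \eqref{I1inIp}, valid for $-n<p<1$ and in particular on the stated range.

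\emph{(1)$\Rightarrow$(2).} Set $K=L^\circ$. Since $L$ is an origin-symmetric full-dimensional polytope, so is $K$, and $h_K=\|\cdot\|_L$. Thus $F_p=\frac1p\|\cdot\|_L^p$, and Theorem~\ref{thm:main} will give that $K=L^\circ$ is a zonotope once we check that $-\widehat{F_p}$ is positive outside the origin. This translation from membership in $\mathcal I_p$ to the Fourier condition is the only real work, and it splits into three cases.

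\emph{Case $0<p<1$.} Koldobsky's characterization says $L\in\mathcal I_p$ if and only if $-\widehat{\|\cdot\|_L^p}$ is positive outside the origin. Dividing by $p>0$ gives the condition for $-\widehat{F_p}$.

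\emph{Case $-n+3<p<0$.} Here $L\in\mathcal I_p$ means $\|\cdot\|_L^p$ is positive definite, so $\widehat{\|\cdot\|_L^p}\ge0$ everywhere, in particular outside the origin. Since $p<0$, we get $-\widehat{F_p}=-\frac1p\widehat{\|\cdot\|_L^p}\ge0$ outside the origin.

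\emph{Case $p=0$.} Theorem~\ref{thm:main} excludes this value, so we pick some $q\in(\max(-n+3,0)-\epsilon,0)$, say $q\in(-n+3,0)$, which is nonempty because $n\ge3$. We then use that $\mathcal I_0\subset\mathcal I_q$ for $-n<q<0$, one of the inclusion results of \cite{KaltonKoldobskyYaskinYaskina}. This gives $L\in\mathcal I_q$, and the negative case applies with $q$ in place of $p$.

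The point needing the most care is this $p=0$ step. If the inclusion $\mathcal I_0\subset\mathcal I_q$ were not available in the cited form, the fallback is to work directly with $-\widehat{\log\|\cdot\|_L}$. One notes that $\log h=\lim_{p\to0}(h^p-1)/p$. The Fourier transform of the constant is supported at the origin, so outside the origin $\widehat{F_p}$ tends to $\widehat{\log h_K}$. The argument behind Theorem~\ref{thm:main} would then need to be run with $\log h_K$ in place of $F_p$.
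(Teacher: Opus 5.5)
Your proposal is correct and follows the paper's proof essentially step for step: (3)$\Rightarrow$(1) by the inclusion $\mathcal I_1\subset\mathcal I_p$, and (1)$\Rightarrow$(2) by applying Theorem~\ref{thm:main} to $K=L^\circ$ through the Fourier characterizations. The case $p=0$ is likewise reduced to some $q\in(-n+3,0)$ using $\mathcal I_0\subset\mathcal I_q$ from \cite{KaltonKoldobskyYaskinYaskina}, as in the paper, so your fallback is not needed. One small correction: $(-n+3,0)$ is nonempty not because $n\ge3$ (it is empty when $n=3$), but because $p=0>-n+3$ forces $n\ge4$, as the paper notes.
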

 
 \begin{proof}
 	The equivalence of (2) and (3) was discussed in the previous section. The 
 	inclusion
 	\[
 	\mathcal I_1\subset\mathcal I_p,
 	\qquad -n<p<1,
 	\]
 gives (3)~$\Rightarrow$~(1).
 	
 It remains to prove \textup{(1)}~$\Rightarrow$~\textup{(2)}. Put
 \(K=L^\circ\), so that \(h_K=\|\cdot\|_L\), and suppose first that
 \(p\neq0\). By the Fourier characterizations of embeddings into
 \(L_p\),
 \[
 -\widehat{F_p}
 =
 -\frac{1}{p}\widehat{h_K^p}
 \]
 is a positive distribution on \(\mathbb R^n\setminus\{0\}\). 
 Therefore, Theorem~\ref{thm:main} implies that \(K\) is a zonotope.

  Finally, suppose that $p=0$. This case
 	can occur only when $n\geq4$. 
 	Since $L\in\mathcal I_0$, by \cite{KaltonKoldobskyYaskinYaskina}, we have 
 	\[
 	L\in \mathcal I_q,
 	\]
 	for any $
 	-n+3<q<0. $ Thus, we are in the case discussed above, and so $K=L^\circ$ is a
 	zonotope. 
 \end{proof}

 \begin{corollary}\label{cor:k-intersection}
 	Let $n\geq5$ and let $1\leq k<n-3$ be an integer. An
 	origin-symmetric full-dimensional polytope $L\subset\mathbb R^n$ is a
 	$k$-intersection body if and only if $L^\circ$ is a zonotope.
 \end{corollary}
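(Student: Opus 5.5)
This follows from Corollary~\ref{cor:polyhedral} together with Koldobsky's Fourier characterization of $k$-intersection bodies recalled in Section~\ref{sec:definitions}. I would set $p=-k$. Since $k$ is an integer with $1\le k<n-3$, we have $-n+3<-k\le -1<1$, so $p$ lies in the range $(-n+3,1)$ covered by Corollary~\ref{cor:polyhedral}. The standing assumption $n\ge 5$ is exactly what guarantees that such a $k$ exists at all: the condition $1\le k<n-3$ requires $n-3>1$, that is, $n\ge5$.

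Next I would translate the notion of $k$-intersection body into the language of the classes $\mathcal I_{-k}$. By Koldobsky's characterization, an origin-symmetric star body $L$ is a $k$-intersection body if and only if $\|\cdot\|_L^{-k}$ is a positive definite distribution. For $-n<-k<0$, this is precisely the Fourier criterion for $(\R^n,\|\cdot\|_L)$ to embed in $L_{-k}$. Since $L$ is an origin-symmetric full-dimensional polytope, it is in particular an origin-symmetric convex body (hence a star body). Therefore $L$ is a $k$-intersection body if and only if $L\in\mathcal I_{-k}$, as already noted in Section~\ref{sec:definitions}.

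Finally, I would apply the equivalence (1)$\Leftrightarrow$(2) of Corollary~\ref{cor:polyhedral} with $p=-k$. It gives $L\in\mathcal I_{-k}$ if and only if $L^\circ$ is a zonotope, and combining this with the previous paragraph proves both directions of the statement.

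There is no real obstacle here; the only point needing care is the bookkeeping in the first step. One must check that the integer range $1\le k<n-3$ maps into the open interval $(-n+3,1)$ and that $p\neq0$, so that Theorem~\ref{thm:main} applies directly inside Corollary~\ref{cor:polyhedral} without using the $p=0$ case. All the substantive work is contained in Theorem~\ref{thm:main}.
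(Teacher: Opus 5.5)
Your proposal is correct and follows the paper's own proof: set $p=-k\in(-n+3,-1]$, use Koldobsky's Fourier characterization to identify $k$-intersection bodies with $\mathcal I_{-k}$, and then apply the equivalence (1)$\Leftrightarrow$(2) of Corollary~\ref{cor:polyhedral}. Your check of the range of $k$ is accurate, including the observation that the condition $1\le k<n-3$ is what forces $n\ge5$.
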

 
 \begin{proof}
 	This follows from Corollary~\ref{cor:polyhedral} and the
 	Fourier-analytic characterization of $k$-intersection bodies.
 \end{proof}

 \section{Proof of Theorem \ref{thm:main}}

  For a polytope $K\subset\R^n$ and $y\in\R^n$, let
 \[
 K^y=\{z\in K:\langle z,y\rangle=h_K(y)\}
 \]
 be the face of $K$ exposed by $y$. If $F$ is a face of $K$, its normal
 cone is
 \[
 N_K(F)=\{y\in\R^n:F\subset K^y\}.
 \]
 For a linear subspace $V\subset\R^n$, $P_V$ denotes the orthogonal
 projection onto $V$.
 
 Let the vertices of the origin-symmetric full-dimensional polytope $K$
 be
 \[
 v_1,\ldots,v_N,
 \]
 and put $E_i=N_K(v_i)$. Then
 \[
 h_K(x)=\langle x,v_i\rangle,\qquad x\in E_i.
 \]
 If $F_{ij}=[v_i,v_j]$ is an edge, put
 \[
 l_{ij}=|v_j-v_i|,
 \qquad
 g_{ij}=\frac{v_j-v_i}{|v_j-v_i|},
 \qquad
 E_{ij}=N_K(F_{ij})=E_i\cap E_j.
 \]
 Thus $g_{ij}$ is a unit vector parallel to the edge and
 $E_{ij}\subset g_{ij}^\perp$.  Every sum over edges is taken over
 unordered edges, each counted once. We will often refer to $E_{ij}$ as a wall, since it separates the cones $E_i$ and $E_j$.
 
 For integrals over a Euclidean space, a linear or affine subspace, or a
 polyhedral cone in its linear span, $dx$ denotes the corresponding
 Lebesgue measure. Thus, on an $m$-dimensional affine set, it agrees with
 $m$-dimensional Hausdorff measure. For integrals over Lipschitz
 hypersurfaces, we use $d\sigma(x)$ for surface measure.  
 
 For the reader's convenience, the proof is split into several steps.
 
\noindent\textbf{Step 1.}
  We compute  the second directional
 derivative of $F_p$ and isolate the edge contributions. Since $h_K$ is piecewise linear, its gradient is
 constant on each $E_i$ and can jump only across the codimension-one walls
 $E_{ij}$ corresponding to edges.
 
 \begin{lemma}\label{lem:hessian}
 	Let $-n+2<p<1$, $p\neq0$, and let $g$ be a unit vector. Then, for every
 	$\phi\in\mathcal S(\R^n)$,
 	\begin{equation}\label{eq:hessian-action}
 		\begin{aligned}
 			\left\langle\partial_g^2F_p,\phi\right\rangle
 			={}&\sum_{F_{ij}\text{ edge}}
 			l_{ij}\langle g_{ij},g\rangle^2
 			\int_{E_{ij}}h_K^{p-1}(x)\phi(x)\,dx\\
 			&+(p-1)\sum_{i=1}^N\langle v_i,g\rangle^2
 			\int_{E_i}\langle x,v_i\rangle^{p-2}\phi(x)\,dx.
 		\end{aligned}
 	\end{equation}
 \end{lemma}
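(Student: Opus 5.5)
We compute the distributional second derivative by moving both derivatives onto the test function and then integrating by parts once on each cone, the way one differentiates a piecewise smooth function with jumps. By definition,
\[
\langle\partial_g^2F_p,\phi\rangle=\langle F_p,\partial_g^2\phi\rangle=\frac1p\sum_{i=1}^N\int_{E_i}\langle x,v_i\rangle^{p}\,\partial_g^2\phi(x)\,dx,
\]
since the cones $E_i$ tile $\R^n$ up to null sets. The integrand is locally integrable because $p>-n$.

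The first step is to perform one integration by parts in each $E_i$. To control the singularity at the origin, we first work on $E_i\setminus B_\varepsilon$ and let $\varepsilon\to0$. In the interior of $E_i$ we have $\partial_g\big(\frac1p\langle x,v_i\rangle^p\big)=\langle x,v_i\rangle^{p-1}\langle v_i,g\rangle$. The divergence theorem applied to the vector field $\frac1p\langle x,v_i\rangle^p\,\partial_g\phi\; g$ then gives
\[
\frac1p\int_{E_i}\langle x,v_i\rangle^p\partial_g^2\phi
=\frac1p\int_{\partial E_i}\langle x,v_i\rangle^p\partial_g\phi\,\langle g,\nu_i\rangle\,d\sigma-\langle v_i,g\rangle\int_{E_i}\langle x,v_i\rangle^{p-1}\partial_g\phi\,dx .
\]
Here $\nu_i$ is the outer normal. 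When we sum over $i$, each wall $E_{ij}$ appears twice with opposite normals and with the same value $h_K^p$, because $h_K$ is continuous. The boundary terms therefore cancel; faces of codimension at least $2$ have measure zero. The sphere term is $O(\varepsilon^{p+n-1})\to0$.

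Next we form the globally defined vector-valued function $\nabla F_p=h_K^{p-1}\nabla h_K$, which equals $h_K^{p-1}v_i$ on $E_i$, and integrate by parts a second time on each cone:
\[
-\langle v_i,g\rangle\int_{E_i}\langle x,v_i\rangle^{p-1}\partial_g\phi
=-\langle v_i,g\rangle\int_{\partial E_i}\langle x,v_i\rangle^{p-1}\phi\,\langle g,\nu_i\rangle\,d\sigma+(p-1)\langle v_i,g\rangle^2\int_{E_i}\langle x,v_i\rangle^{p-2}\phi .
\]
The volume term is exactly the second sum in \eqref{eq:hessian-action}. It is absolutely convergent because $p-2>-n$, which is where the hypothesis $p>-n+2$ enters. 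The sphere term is $O(\varepsilon^{p-1+n-1})\to0$, which again uses $p>-n+2$.

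It remains to compute the jumps across the walls. On $E_{ij}$ the outer normal of $E_i$ is $\nu_i=\pm g_{ij}$, and $\nu_j=-\nu_i$. To fix the sign, note that for $x\in E_i$ we have $\langle x,v_i\rangle\ge\langle x,v_j\rangle$, so $\langle x,v_j-v_i\rangle\le0$ on $E_i$. Hence the outer normal of $E_i$ is $g_{ij}$. The contribution of $E_{ij}$ is therefore
\[
-\big(\langle v_i,g\rangle-\langle v_j,g\rangle\big)\langle g,g_{ij}\rangle\int_{E_{ij}}h_K^{p-1}\phi
= l_{ij}\langle g_{ij},g\rangle^2\int_{E_{ij}}h_K^{p-1}\phi,
\]
using $v_j-v_i=l_{ij}g_{ij}$. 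This is the edge sum in \eqref{eq:hessian-action}.

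We must also check that only edges contribute. Two cones $E_i$ and $E_j$ share an $(n-1)$-dimensional face precisely when $[v_i,v_j]$ is an edge, and all other intersections are lower-dimensional. The wall integral converges, since $h_K^{p-1}\sim|x|^{p-1}$ on the $(n-1)$-dimensional cone $E_{ij}$ and $p-1>-(n-1)$.

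The main obstacle is the rigorous justification at the origin: the excision of $B_\varepsilon$ and the vanishing of the spherical boundary terms, together with the integrability checks. Away from the origin everything is Lipschitz and piecewise smooth, so the divergence theorem on the polyhedral cones applies directly. I expect the sign bookkeeping on the walls to be routine once the outer normal of $E_i$ on $E_{ij}$ is identified as $g_{ij}$.
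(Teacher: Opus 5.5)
Your proof is correct and is essentially the paper's argument: integrate by parts twice on each normal cone $E_i$, let the $h_K^p\,\partial_g\phi$ wall terms cancel by continuity of $h_K$, and get the edge sum from the gradient jump $v_j-v_i=l_{ij}g_{ij}$ across $E_{ij}$. The paper works with $\partial_\alpha\partial_\beta$ and then contracts with $g_\alpha g_\beta$, whereas you use $\partial_g$ directly.

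The only real difference is how the origin is handled. The paper proves the identity for $\phi\in C_c^\infty(\R^n\setminus\{0\})$ on bounded shells $E_i\cap(B_R\setminus\overline{B_r})$. It then extends the identity by observing that no nonzero distribution supported at $0$ is homogeneous of degree $p-2>-n$, followed by density of $C_c^\infty$ in $\mathcal S$. You instead excise $B_\varepsilon$ and check directly that the spherical terms are $O(\varepsilon^{p+n-1})$ and $O(\varepsilon^{p+n-2})$. Your version also needs a truncation at $B_R$ on the unbounded cones; the outer spherical terms then vanish by the rapid decay of $\phi$.
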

 
 \begin{proof}
 	It is enough to prove the formula for test functions supported away from the
 	origin. Indeed, both sides of the asserted equality are homogeneous
 	distributions of degree $p-2>-n$. Thus, they cannot contain  distributions supported at the
 	origin, since any such distribution is a finite linear combination of derivatives of the delta function, whose homogeneity degrees are at most $-n$.

Let 
 $
 	\phi\in C_c^\infty(\R^n\setminus\{0\}),
 	$  and choose $0<r<R$ so that
 	\[
 	\operatorname{supp}\phi\subset B_R\setminus\overline{B_r}.
 	\]
 	Define
 	\[
 	D_i=E_i\cap\bigl(B_R\setminus\overline{B_r}\bigr)\]
 	and \[
 	f_i(x)=\frac1p\langle x,v_i\rangle^p,\qquad x\in E_i.
 	\]
 	Note that $f_i$ is the restriction of $F_p$ to $E_i$. The sets $D_i$ are bounded Lipschitz domains. Since the interiors of the
 	cones $E_i$ are pairwise disjoint and their boundaries have zero Lebesgue
 	measure, for $1\le \alpha,\beta\le n$, we have
 	\begin{equation}\label{eq:coordinate-start}
 		\left\langle \partial_\alpha\partial_\beta F_p,\phi\right\rangle
 		=\left\langle F_p,\partial_\alpha\partial_\beta\phi\right\rangle
 		=\sum_{i=1}^N\int_{D_i}
 		f_i(x)\,\partial_\alpha\partial_\beta\phi(x)\,dx.
 	\end{equation}
 	
 	We now integrate by parts twice on a fixed $D_i$. Let $n_i$ denote the outer
 	unit normal to $D_i$. Applying the divergence theorem to
 	$f_i\,\partial_\beta\phi\,e_\alpha$ gives
 	\begin{equation}\label{eq:first-ibp-hessian}
 		\int_{D_i}f_i\,\partial_\alpha\partial_\beta\phi\,dx
 		=\int_{\partial D_i}f_i(x)\,\partial_\beta\phi(x)\,(n_i)_\alpha
 		\,d\sigma(x)
 		-\int_{D_i}\partial_\alpha f_i(x)\,\partial_\beta\phi(x)\,dx.
 	\end{equation}
 	Next apply the divergence theorem to
 	$\partial_\alpha f_i(x)\phi(x)\,e_\beta$. This yields
 	\begin{equation}\label{eq:second-ibp-hessian}
 		\int_{D_i}\partial_\alpha f_i(x)\,\partial_\beta\phi(x)\,dx
 		=\int_{\partial D_i}\partial_\alpha f_i(x)\phi(x)\,(n_i)_\beta
 		\,d\sigma(x)
 		-\int_{D_i}\partial_\alpha\partial_\beta f_i(x)\,\phi(x)\,dx.
 	\end{equation}
 	Substituting \eqref{eq:second-ibp-hessian} into \eqref{eq:first-ibp-hessian}, we obtain
 	\begin{align*}
 		\int_{D_i}f_i(x)\,\partial_\alpha\partial_\beta\phi(x)\,dx
 		={}&\int_{D_i}\partial_\alpha\partial_\beta f_i(x)\,\phi(x)\,dx
 		\notag\\
 		&+\int_{\partial D_i}
 		\left(
 		f_i(x)(n_i)_\alpha\partial_\beta\phi(x)
 		-\partial_\alpha f_i(x)(n_i)_\beta\phi(x)
 		\right)\,d\sigma(x).                    
 	\end{align*}
 	Thus, \eqref{eq:coordinate-start} becomes
 	 	\begin{align}
  		\left\langle \partial_\alpha\partial_\beta F_p,\phi\right\rangle 
 		={}&\sum_{i=1}^N\int_{D_i}\partial_\alpha\partial_\beta f_i(x)\,\phi(x)\,dx
 		\notag\\
 		&+\sum_{i=1}^N\int_{\partial D_i}
 		\left(
 		f_i(x)(n_i)_\alpha\partial_\beta\phi(x)
 		-\partial_\alpha f_i(x)(n_i)_\beta\phi(x)
 		\right)\,d\sigma(x).    \label{eq:coordinate-start-new}        
 	\end{align}
 	Because $\phi$ and all its derivatives vanish near $\partial B_r$ and
 	$\partial B_R$, the spherical parts of $\partial D_i$ make no contribution.
 	Thus only the codimension-one walls shared by adjacent normal cones remain.
 	
 	Fix an edge $F_{ij}=[v_i,v_j]$. Recall that
 	\[
 	g_{ij}=\frac{v_j-v_i}{|v_j-v_i|}
 	\]
 	points from $E_i$ to $E_j$. Hence the outer normal of $D_i$ along
 	$E_{ij}$ is $g_{ij}$, whereas the outer normal of $D_j$ is
 	$-g_{ij}$. On
 	$E_{ij}$ one has
 	\[
 	\langle x,v_i\rangle=\langle x,v_j\rangle=h_K(x),
 	\]
 	and hence $f_i=f_j$. Therefore the boundary terms involving
 	$\partial_\beta\phi$ cancel. The remaining terms combine to
 	\[
 	\bigl(\partial_\alpha f_j(x)-\partial_\alpha f_i(x)\bigr)
 	(g_{ij})_\beta\phi(x).
 	\]
 	Since
 	\[
 	\partial_\alpha f_i(x)
 	=\langle x,v_i\rangle^{p-1}(v_i)_\alpha,
 	\]
 	we obtain on $E_{ij}$
 	\begin{equation*}
 		\partial_\alpha f_j(x)-\partial_\alpha f_i(x)
 		=h_K^{p-1}(x)
 		\bigl((v_j)_\alpha-(v_i)_\alpha\bigr)=l_{ij}h_K^{p-1}(x)(g_{ij})_\alpha.
 	\end{equation*}
 	Consequently, the total contribution of the wall $E_{ij}$ is
 	\[
 	l_{ij}(g_{ij})_\alpha(g_{ij})_\beta
 	\int_{E_{ij}}h_K^{p-1}(x)\phi(x)\,dx.
 	\]
 	Every codimension-one wall is counted once. Intersections of two or more
 	walls have zero $(n-1)$-dimensional Lebesgue measure and produce no
 	additional term.
 	
 	Inside $E_i$, ordinary differentiation gives
 	\[
 	\partial_\alpha\partial_\beta f_i(x)
 	=(p-1)(v_i)_\alpha(v_i)_\beta
 	\langle x,v_i\rangle^{p-2}.
 	\]
 	Substituting all such terms into
 	\eqref{eq:coordinate-start-new}, we find that, for every
 	$\phi\in C_c^\infty(\R^n\setminus\{0\})$,
 	\begin{align}
 		\left\langle\partial_\alpha\partial_\beta F_p,\phi\right\rangle
 		={}&\sum_{F_{ij}\text{ edge}}
 		l_{ij}(g_{ij})_\alpha(g_{ij})_\beta
 		\int_{E_{ij}}h_K^{p-1}(x)\phi(x)\,dx \notag\\
 		&+(p-1)\sum_{i=1}^N
 		(v_i)_\alpha(v_i)_\beta
 		\int_{E_i}\langle x,v_i\rangle^{p-2}\phi(x)\,dx.
 		\label{eq:coordinate-hessian-away}
 	\end{align}
 	
 	By the homogeneity argument at the beginning of the proof, this identity
 	extends to every $\phi\in C_c^\infty(\R^n)$. Finally,
 	\[
 	\partial_g^2=\sum_{\alpha,\beta=1}^n
 	g_\alpha g_\beta\partial_\alpha\partial_\beta.
 	\]
 	Multiplying \eqref{eq:coordinate-hessian-away} by
 	$g_\alpha g_\beta$ and summing over $\alpha,\beta$ gives
 	\eqref{eq:hessian-action} for every
 	$\phi\in C_c^\infty(\R^n)$.  Since $C_c^\infty(\R^n)$ is dense in $\mathcal S(\R^n)$, the identity holds for every
 	$\phi\in\mathcal S(\R^n)$.
 \end{proof}

\noindent\textbf{Step 2.} In formula \eqref{eq:hessian-action}, we isolate the terms that correspond to the edges parallel to a fixed vector.
 
Recall that
 \[
 n\geq3,\qquad -n+3<p<1,\qquad p\neq0,
 \]
 and that $-\widehat{F_p}$ is positive outside the origin. Fix a unit
 vector $g$ parallel to an edge of $K$, and put
 \[
 H=g^\perp,
 \qquad
 Q=P_HK.
 \]
 For $y\in H$, one has $h_K(y)=h_Q(y)$. Define
 \begin{equation}\label{eq:Tg}
 	T_g(y)=\sum_{F_{ij}\parallel g}
 	l_{ij}h_Q^{p-1}(y)\ind_{E_{ij}}(y),
 	\qquad y\in H.
 \end{equation}
 The function $T_g$ is homogeneous of degree $p-1$ on the
 $(n-1)$-dimensional subspace $H$. Since $p-1>-(n-1)$, it is locally integrable
 at the origin; it has at most polynomial growth at infinity and hence
 defines a tempered distribution on $H$.
 
 \begin{lemma}\label{prop:Tg}
 	The distribution $T_g$ is positive definite on $H$.
 \end{lemma}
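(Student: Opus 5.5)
The plan is to derive positive definiteness of $T_g$ from that of $\partial_g^2F_p$ on $\R^n$, by testing the formula of Lemma~\ref{lem:hessian} against functions that concentrate on the hyperplane $H=g^\perp$. Write $x=y+tg$ with $y\in H$, $t\in\R$.

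\emph{Step (a): $\partial_g^2F_p$ is positive definite on $\R^n$.} For distributions we have $\widehat{\partial_g^2F_p}(\xi)=-\langle\xi,g\rangle^2\widehat{F_p}(\xi)$. Since $-\widehat{F_p}$ is positive outside the origin, $\widehat{\partial_g^2F_p}$ is positive outside the origin. It is homogeneous of degree $-n-p+2>-n$, because $p<2$. A distribution supported at the origin is a combination of derivatives of $\delta$, whose degrees are at most $-n$, so it cannot occur. Hence $\widehat{\partial_g^2F_p}$ is a positive distribution on all of $\R^n$. This is the same homogeneity argument as in the proof of Lemma~\ref{lem:hessian}.

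\emph{Step (b): choice of test functions.} Fix even nonnegative test functions $\zeta\in\mathcal S(H)$ and $\omega\in\mathcal S(\R)$ with $\int\omega>0$, and put $\eta_\varepsilon(y+tg)=\zeta(y)\,\omega(\varepsilon t)$. By Step (a),
\[
0\le\langle\widehat{\partial_g^2F_p},\eta_\varepsilon\rangle=\langle\partial_g^2F_p,\widehat{\eta_\varepsilon}\rangle,
\qquad
\widehat{\eta_\varepsilon}(y+tg)=\widehat\zeta(y)\,\varepsilon^{-1}\widehat\omega(t/\varepsilon).
\]
Here $\widehat\zeta$ and $\widehat\omega$ are real because $\zeta$ and $\omega$ are even. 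Insert $\widehat{\eta_\varepsilon}$ into \eqref{eq:hessian-action}, multiply by $\varepsilon$, and let $\varepsilon\to0$. The goal is to show that the limit equals $\widehat\omega(0)\,\langle T_g,\widehat\zeta\rangle$. Since $\widehat\omega(0)=\int\omega>0$, this gives $\langle\widehat{T_g},\zeta\rangle\ge0$ for all nonnegative even $\zeta$. For general nonnegative $\zeta$ the same conclusion follows by symmetrizing, because $T_g$ is even.

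\emph{Step (c): the three kinds of terms in \eqref{eq:hessian-action}.}
\begin{itemize}
\item[(i)] \emph{Walls parallel to $g$.} If $g_{ij}=\pm g$, then $E_{ij}\subset H$, so $t=0$ on the wall. After multiplication by $\varepsilon$ these terms give exactly $\widehat\omega(0)\,l_{ij}\int_{E_{ij}}h_Q^{p-1}\widehat\zeta\,dy$. Summed over such walls, this is $\widehat\omega(0)\langle T_g,\widehat\zeta\rangle$.
\item[(ii)] \emph{Other walls.} A wall with $\langle g_{kl},g\rangle=0$ has zero coefficient in \eqref{eq:hessian-action}. If $\langle g_{kl},g\rangle\ne0$, then $P_H$ restricted to $g_{kl}^\perp$ is a linear isomorphism onto $H$. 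Use $h_K(x)\ge c|x|\ge c|P_Hx|$ together with $p-1<0$. The scaled integrand $h_K^{p-1}|\widehat\zeta(P_Hx)|\,|\widehat\omega(t/\varepsilon)|$ is then dominated by $C|P_Hx|^{p-1}|\widehat\zeta(P_Hx)|$. Changing variables to $H$, this dominating function is integrable since $p-1>-(n-1)$. Moreover $\widehat\omega(t/\varepsilon)\to0$ wherever $t\ne0$, and $\{t=0\}$ meets the wall in a null set. By dominated convergence, $\varepsilon$ times these terms tends to $0$.
\item[(iii)] \emph{Volume terms.} Bound $\langle x,v_i\rangle^{p-2}\le C|x|^{p-2}\le C|y|^{p-2}$. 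Then each volume term is at most
\[
C\int_H|y|^{p-2}|\widehat\zeta(y)|\,dy\cdot\int_\R\varepsilon^{-1}|\widehat\omega(t/\varepsilon)|\,dt=C'.
\]
The first integral is finite precisely because $p-2>-(n-1)$, i.e.\ $p>-n+3$; the second is independent of $\varepsilon$. So these terms are $O(1)$, and multiplied by $\varepsilon$ they vanish in the limit.
\end{itemize}

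\emph{Main obstacle.} The delicate part is Step (c): justifying the limits, above all the uniform integrability of the volume terms near the origin. This is exactly where the hypothesis $p>-n+3$ enters and cannot be weakened in this argument. Some care is also needed in (ii) with the change of variables onto $H$ and the degenerate walls, but those are routine.
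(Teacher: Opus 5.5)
Your proof is correct, but it isolates the hyperplane part of $\partial_g^2F_p$ by a different mechanism than the paper.

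\emph{The paper's route.} It translates the test function along $g$, taking $\Phi_t(\tau g+\eta)=\chi(\tau-t)\psi(\eta)$. For $|t|>1$ the support of $\Phi_t$ misses the origin, so positivity of $-\widehat{F_p}$ outside the origin can be used directly. The walls parallel to $g$ contribute exactly $\langle\widehat{T_g},\psi\rangle$ for every $t$. Every other term is an oscillatory integral $\widehat{A}(ta)$ with $a\neq0$, which vanishes by Riemann--Lebesgue.

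\emph{Your route.} You instead dilate along $g$. This concentrates $\widehat{\eta_\varepsilon}$ on $H$, but it leaves $\eta_\varepsilon$ supported near the origin, which is why you need Step (a). That step is true, but your one-line justification skips a point. You must first show that the positive measure representing $\widehat{\partial_g^2F_p}$ on $\R^n\setminus\{0\}$ is finite near $0$. This holds because its homogeneity degree is $2-p-n>-n$, so the masses of dyadic annuli are summable. The measure then extends to a positive tempered distribution on $\R^n$. Only after that is the difference a distribution supported at the origin, which the degree count kills.

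\emph{What each route buys.} The paper's route needs no positivity upgrade at the origin. Yours handles every limit by plain dominated convergence, avoiding Riemann--Lebesgue on several hyperplanes. It also gives positivity for all nonnegative Schwartz $\zeta$ at once, with no density argument. The evenness assumption and the symmetrization are superfluous, since $\langle\widehat{\partial_g^2F_p},\eta_\varepsilon\rangle$ is real anyway.

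\emph{Where $p>-n+3$ enters.} Your claim that (iii) is where $p>-n+3$ is indispensable is an artifact of the bound $|x|^{p-2}\le|y|^{p-2}$, which discards the decay in $t$. After multiplying by $\varepsilon$, the volume integrand is dominated by $C|x|^{p-2}|\widehat\zeta(P_Hx)|$. This function is in $L^1(\R^n)$ for $-n+2<p<1$: integrating first in $t$ gives $c\,|y|^{p-1}|\widehat\zeta(y)|$ on $H$. So dominated convergence applies exactly as in (ii). As in the paper's proof, this lemma needs only $p>-n+2$. The condition $p>-n+3$ genuinely enters in Lemma~\ref{lem:no-jumps}, through $\alpha=1-p<d-1$.
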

 
 \begin{proof}
 	Put
 	\[
 	U=\partial_g^2F_p.
 	\]
 	The Fourier transform of $U$ is positive outside the origin. Indeed, as a
 	distribution,
 	\[
 	\widehat U
 	=-\langle\,\cdot\,,g\rangle^2\widehat{F_p}
 	=\langle\,\cdot\,,g\rangle^2\bigl(-\widehat{F_p}\bigr).
 	\]

 	Let $\psi\in C_c^\infty(H)$ be non-negative, and choose
 	$\chi\in C_c^\infty((-1,1))$ such that
 	\[
 	\chi\geq0,
 	\qquad
 	\int_{\mathbb R}\chi(\tau)\,d\tau=1.
 	\]
 	For $t\in\mathbb R$, define
 	\[
 	\Phi_t(\tau g+\eta)
 	=
 	\chi(\tau-t)\psi(\eta),
 	\qquad
 	\tau\in\mathbb R,\quad \eta\in H.
 	\]
 	Then $\Phi_t\in C_c^\infty(\mathbb R^n)$ and $\Phi_t\geq0$.
 	Moreover, for $|t|>1$,
 	\[
 	0\notin\operatorname{supp}\Phi_t.
 	\]
 	Since $\widehat U$ is positive on
 	$\mathbb R^n\setminus\{0\}$, for $|t|>1$ we  have
 	\begin{equation}\label{eq:positive-U-Phi}
 		0\leq\langle\widehat U,\Phi_t\rangle
 		=
 		\langle U,\widehat{\Phi_t}\rangle.
 	\end{equation}

 	Write
 	\[
 	x=s g+y,
 	\qquad s\in\R,\quad y\in H.
 	\]
 	A direct calculation gives
 	\begin{equation}\label{eq:Phi-hat}
 		\widehat{\Phi_t}(s g+y)
 		=e^{-its}\widehat\chi(s)\widehat\psi(y).
 	\end{equation}
 	Applying Lemma~\ref{lem:hessian} with
 	$\phi=\widehat{\Phi_t}$, we obtain
 	\begin{align}
 		0\leq\langle\widehat U,\Phi_t\rangle
 		={}&\sum_{F_{ij}\text{ edge}}
 		l_{ij}\langle g_{ij},g\rangle^2
 		\int_{E_{ij}}h_K^{p-1}(x)\widehat{\Phi_t}(x)
 		\,dx \notag\\
 		&+(p-1)\sum_{i=1}^N\langle v_i,g\rangle^2
 		\int_{E_i}\langle x,v_i\rangle^{p-2}
 		\widehat{\Phi_t}(x)\,dx.                 \label{eq:direct-first-descent}
 	\end{align}
 	
 	Suppose first that $F_{ij}$ is parallel to $g$. Then
 	$g_{ij}=\pm g$ and $E_{ij}\subset g^\perp=H$. Thus $s=0$ on
 	$E_{ij}$, and \eqref{eq:Phi-hat} gives
 	\[
 	\widehat{\Phi_t}(y)
 	=\widehat\chi(0)\widehat\psi(y)
 	=\widehat\psi(y),
 	\qquad y\in E_{ij},
 	\]
 	because $\widehat\chi(0)=\int_\R\chi(\tau)\, d\tau=1$. Consequently, the total
 	contribution in \eqref{eq:direct-first-descent} of the edges parallel to
 	$g$ is
 	\begin{equation}
 		\sum_{F_{ij}\parallel g}l_{ij}
 		\int_{E_{ij}}h_Q^{p-1}(y)\widehat\psi(y)\,dy 
 	=\langle T_g,\widehat\psi\rangle
 		=\langle\widehat{T_g},\psi\rangle.       \label{eq:surviving-first-descent}
 	\end{equation}
 	
 	We next show that every other term in
 	\eqref{eq:direct-first-descent} tends to zero as $|t|\to\infty$.
 	Consider an edge $F_{ij}$ not parallel to $g$, and put
 	\[
 	V_{ij}=g_{ij}^\perp,
 	\qquad
 	a_{ij}=P_{V_{ij}}g.
 	\]
 	Since $F_{ij}$ is not parallel to $g$, one has $a_{ij}\neq0$. For
 	$x\in V_{ij}$,
 	\[
 	\langle x,g\rangle=\langle x,a_{ij}\rangle.
 	\]
 	Define
 	\[
 	A_{ij}(x)=\ind_{E_{ij}}(x)h_K^{p-1}(x)
 	\widehat\chi(\langle x,g\rangle)
 	\widehat\psi\bigl(P_Hx\bigr),
 	\qquad x\in V_{ij}.
 	\]
 	Then the integral over $E_{ij}$ in \eqref{eq:direct-first-descent} is
 	\begin{equation}\label{eq:first-descent-wall-Fourier}
 		\int_{V_{ij}}e^{-it\langle x,a_{ij}\rangle}A_{ij}(x)\,dx
 		=\widehat{A_{ij}} (t a_{ij}),
 	\end{equation}
 	where the Fourier transform on the right is taken in  
 	$V_{ij}$. 
 	
 	Our goal is to show that the latter tends to zero as $|t|\to \infty$.
 	Observe that $A_{ij}$ is integrable on $V_{ij}$. Indeed, near the origin,
 	\[
 	|A_{ij}(x)|\leq C|x|^{p-1},
 	\]
 	which is integrable since $p-1>-(n-1)$. 
   At infinity, $A_{ij}$ is rapidly decreasing because of the Schwartz functions $\widehat\chi$ and $\widehat\psi$. We will provide more details below, since this argument will be used a few times.
 	
 	Since $\widehat\chi\in\mathcal S(\mathbb R)$ and
 	$\widehat\psi\in\mathcal S(H)$, for every $N>0$, we have
 	\begin{align*}
 	\left|
 	\widehat\chi(\langle x,g\rangle)
 	\widehat\psi(P_Hx)
 	\right|
 	&\leq
 	C_N(1+|\langle x,g\rangle|^2)^{-N}
 	(1+|P_Hx|^2)^{-N}\\
 	  	& 	  \leq	C_N(1+|\langle x,g\rangle|^2+|P_Hx|^2 )^{-N}
 	 \\
 	&= C_N(1+|x|^2)^{-N}.
 	\end{align*}
 Thus this product is rapidly
 	decreasing at infinity, and hence
 	$A_{ij}\in L^1(V_{ij})$. Since $a_{ij}\neq0$, the
 	Riemann--Lebesgue lemma applied in $V_{ij}$ to
 	\eqref{eq:first-descent-wall-Fourier} shows that this  integral tends
 	to zero as $|t|\to \infty$.
 	
 	Now consider the integral over $E_i$ in \eqref{eq:direct-first-descent}. Let
 	\[
 	A_i(x)=\ind_{E_i}(x)\langle x,v_i\rangle^{p-2}
 	\widehat\chi(\langle x,g\rangle)\widehat\psi(P_Hx).
 	\]
 	The corresponding integral becomes
 	\[
 	\int_{\R^n}e^{-it\langle x,g\rangle}A_i(x)\,dx
 	=\widehat{A_i}(tg).
 	\]
 	Again $A_i\in L^1(\R^n)$. Near the origin this follows from
	\[
|A_{i}(x)|\leq C|x|^{p-2},
\]
 	and at infinity it follows from the Schwartz factors. The
 	Riemann--Lebesgue lemma gives
 	\[
 	\widehat{A_i}(tg)\to 0,
 	\quad \mbox{as }|t|\to\infty.
 	\]
 Thus every term in \eqref{eq:direct-first-descent}, except for the
 sum of the  terms corresponding to edges $F_{ij}$ parallel to $g$, tends
 to zero as $|t|\to\infty$.
 	
 	Combining this with \eqref{eq:surviving-first-descent}, we obtain
 	\[
 	\langle\widehat U,\Phi_t\rangle
 	=\langle\widehat{T_g},\psi\rangle+o(1), \quad \mbox{as }|t|\to\infty.
 	\]
 	The left-hand side is non-negative for every $|t|>1$ by
 	\eqref{eq:positive-U-Phi}. Passing to the limit gives
 	\[
 	\langle\widehat{T_g},\psi\rangle\geq0,
 	\]
 for every non-negative $\psi\in C_c^\infty(H)$. Since every non-negative $\varphi\in\mathcal S(H)$ can be approximated  by non-negative functions in $C_c^\infty(H)$,
 	we get 
 	\[
 	\langle \widehat{T_g},\varphi\rangle\geq0.
 	\]
 	Thus $T_g$ is positive definite on $H$.
 \end{proof}
 
 \bigskip

 Let $a_1,\ldots,a_M$ be the vertices of $Q=P_HK$, and put
 \[
 C_i=N_Q(a_i)\subset H.
 \]
 The fiber above $a_i$ is
 \[
 J_i=K\cap(a_i+\R g).
 \]
 It is either a point or an edge parallel to $g$. Set
 \[
 \lambda_i=\length(J_i).
 \]
 
 \begin{lemma}\label{lem:fibres}
 	For $y\in\relint C_i$,
 	\begin{equation}\label{eq:T-chambers}
 		T_g(y)=\lambda_i h_Q^{p-1}(y).
 	\end{equation}
 \end{lemma}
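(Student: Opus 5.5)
Fix $y\in\relint C_i$. By \eqref{eq:Tg}, it suffices to show two things. First, there is at most one edge $F_{jk}\parallel g$ with $y\in E_{jk}$. Second, if $J_i$ is an edge of $K$ then $y\in E_{J_i}$ and the length equals $\lambda_i$, while if $J_i$ is a point then no edge parallel to $g$ has $y$ in its normal cone. Then $T_g(y)=\lambda_i h_Q^{p-1}(y)$ in both cases, using $h_K(y)=h_Q(y)$ for $y\in H$.

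\textbf{Key identification of exposed faces.} For $y\in H$ we have $\langle z,y\rangle=\langle P_Hz,y\rangle$ for all $z\in K$. Hence
\[
K^y=K\cap P_H^{-1}(Q^y),
\]
where $Q^y$ is the face of $Q$ exposed by $y$ in $H$. This holds because $z\in K$ maximizes $\langle\cdot,y\rangle$ iff $P_Hz$ maximizes it over $Q=P_HK$. For $y\in\relint C_i$, the face $Q^y$ is exactly the vertex $a_i$: a point in the relative interior of the normal cone of a vertex exposes only that vertex, since it lies in no normal cone of a larger face. Therefore
\[
K^y=K\cap(a_i+\R g)=J_i .
\]

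\textbf{Matching with edges.} Now $y\in E_{jk}=N_K(F_{jk})$ iff $F_{jk}\subset K^y=J_i$. If $F_{jk}\parallel g$, then since $J_i$ is a segment parallel to $g$ containing the edge $F_{jk}$, we get $F_{jk}\subset J_i$. Moreover $J_i=K^y$ is itself a face of $K$, of dimension at most one. An edge contained in a face of dimension at most one must equal it. So $F_{jk}=J_i$. Thus:
\begin{itemize}
\item if $\lambda_i=0$, no edge parallel to $g$ has $y$ in its normal cone, and $T_g(y)=0=\lambda_i h_Q^{p-1}(y)$;
\item if $\lambda_i>0$, then $J_i$ is an edge $F_{jk}$ parallel to $g$, of length $l_{jk}=\lambda_i$, with $y\in E_{jk}$. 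It is the only such edge.
\end{itemize}
Summing in \eqref{eq:Tg} gives \eqref{eq:T-chambers}.

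\textbf{Main obstacle.} The only delicate point is justifying $Q^y=\{a_i\}$ for $y\in\relint C_i$. Here we use that a vector $y$ lies in $N_Q(F)$ for a face $F\supsetneq\{a_i\}$ only if $y$ lies in a proper face of $C_i$. This is the standard fact that the normal cones of the faces of $Q$ form a fan with $N_Q(F)$ a face of $N_Q(a_i)$ whenever $a_i\in F$. Full-dimensionality of $Q$ in $H$, which follows from that of $K$, guarantees $C_i$ is a pointed cone whose relative interior is nonempty and open in $H$.
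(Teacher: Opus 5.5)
Your proposal is correct and follows the paper's proof: you identify $K^y=J_i$ for $y\in\relint C_i$ using $\langle z,y\rangle=\langle P_Hz,y\rangle$ and $Q^y=\{a_i\}$, and then use $y\in E_{jk}\iff F_{jk}\subset K^y=J_i$ to conclude that exactly one edge (namely $J_i$, of length $\lambda_i$) contributes when $\lambda_i>0$, and none when $\lambda_i=0$. Your normal-fan justification of $Q^y=\{a_i\}$ is a reasonable addition, since the paper only asserts it. The phrase ``since $J_i$ is a segment \dots containing the edge $F_{jk}$, we get $F_{jk}\subset J_i$'' is circular as worded, but harmless, because the inclusion follows directly from the normal-cone equivalence you already stated.
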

 
 \begin{proof}
 	Let $y\in\relint C_i$. The face of $Q$ exposed by $y$ is the vertex
 	$a_i$. Since $y\in H=g^\perp$, the face of $K$ exposed by $y$ is exactly
 	the fiber $J_i$. Indeed, the projection of $K^y$ is contained in
 	$Q^y=\{a_i\}$, while every $x\in J_i$ satisfies
 	\[
 	\langle x,y\rangle
 	=\langle a_i,y\rangle
 	=h_Q(y)
 	=h_K(y).
 	\]
 	
 	Now consider an edge $F_{jm}\parallel g$ appearing in
 \eqref{eq:Tg}. By the definition of its normal cone,
 \[
 y\in E_{jm}=N_K(F_{jm})
 \quad\Longleftrightarrow\quad
 F_{jm}\subset K^y=J_i.
 \]
 If $J_i$ is a point, no edge can be contained in $J_i$. If $J_i$ is
 an edge, the only edge of $K$ contained in $J_i$ is $J_i$ itself.
 Consequently,
 \[
 \sum_{F_{jm}\parallel g}
 l_{jm}\ind_{E_{jm}}(y)
 =
 \length(J_i)
 =
 \lambda_i.
 \]
 Using \eqref{eq:Tg}, we conclude that
 \[
 T_g(y)
 =
 \lambda_i h_Q^{p-1}(y),
 \]
 as claimed.
 \end{proof}

 \bigskip

\noindent\textbf{Step 3.}  We now show that the numbers $\lambda_i$ in \eqref{eq:T-chambers} are
 all equal. The following lemma is stated in an arbitrary dimension $d$.
 
 \begin{lemma}\label{lem:no-jumps}
 	Let $Q\subset\R^d$ be a full-dimensional polytope with
 	$0\in\operatorname{int}Q$, let $h=h_Q$, and let
 	$C_i=N_Q(a_i)$ be its vertex normal cones. Suppose that
 	$\lambda_i\in\R$ and that the locally integrable function $T$ satisfies
 	\[
 	T(y)=\lambda_i h^{-\alpha}(y), 
 	\qquad y\in\relint C_i,
 	\]
 	where $0<\alpha<d-1$.
 	If $T$ is positive definite, then all the
 	numbers $\lambda_i$ are equal.
 \end{lemma}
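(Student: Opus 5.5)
The plan is to repeat the descent of Lemma~\ref{prop:Tg} one order lower. Instead of reading off a surviving term directly, I will extract it from the $1/t$ asymptotics of $\langle\widehat T,\Phi_t\rangle$. The key observation is that this quantity is real, while the jump of $T$ across a wall produces a purely imaginary leading coefficient; that coefficient must therefore vanish.

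\textbf{Setup.} Fix a facet normal cone of the fan, that is, a wall $W=C_i\cap C_j$ of dimension $d-1$ separating two vertex cones, and let $g$ be a unit normal to $W$, with $H=g^\perp\supset W$. As in Lemma~\ref{prop:Tg}, I take $\chi\in C_c^\infty((-1,1))$ even, nonnegative, with $\int\chi=1$, and $\psi\in C_c^\infty(H)$ real and even, so that $\widehat\chi$ and $\widehat\psi$ are real. Set $\Phi_t(\tau g+\eta)=\chi(\tau-t)\psi(\eta)$. Since $\widehat T\ge 0$ as a distribution and $\Phi_t$ is real, the number $\langle\widehat T,\Phi_t\rangle$ is real. (If $\psi$ is not of one sign I split $\psi=\psi_+-\psi_-$ and use linearity; positivity is used only to know that $\widehat T$ is a real distribution.) By \eqref{eq:Phi-hat},
\[
\langle\widehat T,\Phi_t\rangle=\int_H\int_{\R}T(sg+y)\,e^{-its}\widehat\chi(s)\,ds\,\widehat\psi(y)\,dy .
\]

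\textbf{Integrating by parts in $s$.} For fixed $y$, the function $s\mapsto T(sg+y)\widehat\chi(s)$ is piecewise smooth. It has jumps where the line $y+\R g$ crosses walls of the fan, and it is singular only near the origin, where $|\partial_s(h^{-\alpha})|\le C|x|^{-\alpha-1}$; this is integrable on $\R^d$ because $\alpha+1<d$. Integrating by parts once gives
\[
it\,\langle\widehat T,\Phi_t\rangle=\sum_{\text{walls}}(\text{jump terms})+\int_{\R^d}e^{-it\langle x,g\rangle}\,\partial_g\bigl(T\,\widehat\chi(\langle\cdot,g\rangle)\bigr)(x)\,\widehat\psi(P_Hx)\,dx .
\]
This is the distributional derivative computation of Lemma~\ref{lem:hessian} at first order. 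The volume integral is an $L^1$ function evaluated at $tg$, so it is $o(1)$ by Riemann--Lebesgue. A wall not contained in $H$ contributes, exactly as in \eqref{eq:first-descent-wall-Fourier}, the Fourier transform of an $L^1$ function on its hyperplane at a nonzero frequency; it is $L^1$ since $h^{-\alpha}$ is integrable on $(d-1)$-dimensional sets, as $\alpha<d-1$. These terms are therefore also $o(1)$. The walls lying in $H$ contribute a term independent of $t$:
\[
J(\psi)=\sum_{W'\subset H}\int_{W'}\bigl(\lambda_{+}-\lambda_{-}\bigr)h^{-\alpha}(y)\,\widehat\psi(y)\,dy ,
\]
with $\widehat\chi(0)=1$ and the sign fixed by orientation. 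Hence $t\langle\widehat T,\Phi_t\rangle=-iJ(\psi)+o(1)$. The left side is real and $J(\psi)$ is real, so $J(\psi)=0$.

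\textbf{Conclusion and main obstacle.} Linearity extends $J(\psi)=0$ to all real $\psi\in C_c^\infty(H)$, and the Fourier transforms of such $\psi$ are dense enough to force the locally integrable jump function $\sum_{W'\subset H}(\lambda_+-\lambda_-)h^{-\alpha}\ind_{W'}$ to vanish a.e. on $H$. Since the walls in $H$ have disjoint relative interiors and $h>0$ there, this gives $\lambda_i=\lambda_j$ for our wall. Any two full-dimensional cones of the normal fan are connected by a chain of cones, consecutive ones sharing a $(d-1)$-dimensional wall, so all $\lambda_i$ are equal.

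The step needing the most care is the justification of the integration by parts and of the $o(1)$ estimates near the origin. This is where both inequalities $0<\alpha<d-1$ enter: integrability of $h^{-\alpha-1}$ in $\R^d$, and integrability of $h^{-\alpha}$ on walls. I would handle it as in Lemma~\ref{lem:hessian}: first verify the identity for the derivative distribution $\partial_gT$ away from the origin, then use homogeneity of degree $-\alpha-1>-d$ to rule out any term supported at the origin.
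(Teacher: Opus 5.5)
Your setup, the integration by parts in the $g$-direction, the Riemann--Lebesgue treatment of the volume term and of the walls not contained in $H$, and the final chain-of-cones argument all match the paper and are fine. The gap is in the step that extracts $J(\psi)=0$. You use positive definiteness ``only to know that $\widehat T$ is a real distribution'', and that is not enough. For real $T$, $\widehat T$ is real exactly when $T$ is even. So your argument would prove the lemma for every real even $T$ of the given form, which is false. Take $d=3$, $Q=[-1,1]^3$, $\lambda_i=1$ on the octants $\{y_1,y_2,y_3>0\}$ and $\{y_1,y_2,y_3<0\}$, and $\lambda_i=0$ on the other six.

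Concretely, the error is the claim that linearity extends $J(\psi)=0$ to all real $\psi$. You needed $\psi$ even to make $\widehat\psi$, and hence $J(\psi)$, real, so linearity only gives $J(\psi)=0$ for even $\psi$. For a non-even real $\psi$, realness of $t\langle\widehat T,\Phi_t\rangle$ only yields $\operatorname{Re}J(\psi)=0$, which again sees only the even part of $\psi$. Worse, suppose $T$ is even, which is exactly the situation in Step 4: there $Q=P_HK$ is origin-symmetric and the fibres over $\pm a_i$ have equal length. Then $\partial_gT$ is odd, so the jump density $\sum_{W'\subset H}(\lambda_+-\lambda_-)h^{-\alpha}\ind_{W'}$ is odd on $H$, and $J(\psi)=0$ holds trivially for every even $\psi$. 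The step then gives no information at all. In the cube example this density is $h^{-\alpha}\bigl(\ind_{\{y_1,y_2>0\}}-\ind_{\{y_1,y_2<0\}}\bigr)$ on $H=\{y_3=0\}$.

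The missing ingredient is a \emph{signed} use of positivity. Take $\chi\ge0$ and an arbitrary nonnegative $\psi\in C_c^\infty(H)$, with no evenness assumed. Then $\Phi_t\ge0$, so $\langle\widehat T,\Phi_t\rangle\ge0$ for every $t$. Hence $t\langle\widehat T,\Phi_t\rangle$ is $\ge0$ for $t>0$ and $\le0$ for $t<0$. Your asymptotics $t\langle\widehat T,\Phi_t\rangle=-iJ(\psi)+o(1)$ hold both as $t\to+\infty$ and as $t\to-\infty$. So the constant $-iJ(\psi)$ is both $\ge0$ and $\le0$, hence $J(\psi)=0$. Nonnegative test functions span $C_c^\infty(H)$, so the jump density vanishes and your conclusion goes through. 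This is precisely the paper's argument. The paper pairs $\widehat T$ with $\langle\cdot,u\rangle\Phi_t$, which differs from $t\Phi_t$ by a term contributing only $o(1)$. That test function is nonnegative for $t>1$ and nonpositive for $t<-1$, and the paper then lets $t\to\pm\infty$. The sign change of the test function is where positive definiteness is genuinely used.
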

 
 \begin{proof}
 	On each cone $C_i$,
 	\[
 	h(y)=\langle y,a_i\rangle,
 	\]
 	and, since $0\in\operatorname{int}Q$, one has $h^{-\alpha}(y)\le C |y|^{-\alpha}$.
 	Since  $\alpha<d-1$, all integrals considered below are convergent near the origin.

 		Fix a unit vector $u$ parallel to an edge of $Q$. Choose an orientation
 		of every edge $[a_i,a_j]$ of $Q$; for the edges parallel to $u$, choose
 		the orientation so that
 		\[
 		a_j-a_i=c_{ij}u,
 		\qquad c_{ij}>0.
 		\]
 		The orientations of all other edges may be chosen arbitrarily.
 		
 		For every oriented edge, put
 		\[
 		\nu_{ij}=\frac{a_j-a_i}{|a_j-a_i|}.
 		\]
 		Then $\nu_{ij}$ is the unit normal to the common wall
 		\[
 		C_{ij}=C_i\cap C_j=N_Q([a_i,a_j])
 		\subset\nu_{ij}^{\perp},
 		\]
 		pointing from $C_i$ to $C_j$.  		
 		For an edge parallel to $u$, our choice of orientation gives
 		$\nu_{ij}=u$.
 		
 		We first compute the distributional derivative of $T$. Let
 		$\varphi\in\mathcal S(\mathbb R^d)$. By definition,
 		\[
 		\langle\partial_uT,\varphi\rangle
 		=-	\langle T,\partial_u\varphi\rangle
 		=
 		-\sum_i\lambda_i
 		\int_{C_i}h^{-\alpha}(y)\partial_u\varphi(y)\,dy.
 		\]
 		For $0<r<R$, put
 		\[
 		D_i=C_i\cap
 		\bigl(B_R\setminus\overline{B_r}\bigr).
 		\]
 		On $C_i$,
 		\[
 		\partial_u h^{-\alpha}(y)
 		=
 		-\alpha h^{-\alpha-1}(y)\langle a_i,u\rangle.
 		\]
 		Applying the divergence theorem to
 		$\lambda_ih^{-\alpha}\varphi u$ on $D_i$, summing over $i$, and then
 		letting $r\to0$ and $R\to\infty$, we obtain
 		\begin{align}
 			\langle\partial_uT,\varphi\rangle
 			={}&
 			-\alpha\sum_i\lambda_i
 			\int_{C_i}h^{-\alpha-1}(y)
 			\langle a_i,u\rangle\varphi(y)\,dy
 			\notag\\
 			&+
 			\sum_{[a_i,a_j]\text{ edge}}
 			(\lambda_j-\lambda_i)\langle\nu_{ij},u\rangle
 			\int_{C_{ij}}h^{-\alpha}(y)\varphi(y)\,dy.
 			\label{eq:full-first-derivative}
 		\end{align}
 		Indeed, along $C_{ij}$ the outer normals of $C_i$ and $C_j$ are
 		$\nu_{ij}$ and $-\nu_{ij}$, respectively. The inner spherical boundary
 		terms are
 		\[
 		O(r^{d-1-\alpha})=o(1),
 		\]
 		and the outer spherical boundary terms tend to zero by the rapid decay
 		of $\varphi$.
 		
 		Put $H=u^\perp$. For $\psi\in\mathcal S(H)$, define
 		\begin{equation}\label{eq:Ju}
 			\langle J_u,\psi\rangle
 			=
 			\sum_{[a_i,a_j]\parallel u}
 			(\lambda_j-\lambda_i)
 			\int_{C_{ij}}h^{-\alpha}(y)\psi(y)\,dy.
 		\end{equation}
 		Since $\alpha<d-1$, this defines a tempered distribution on $H$.
 		
 		Let $\mathcal R_u\in\mathcal S'(\mathbb R^d)$ contain all the remaining
 		terms in \eqref{eq:full-first-derivative}, that is,
 		\begin{align*}
 			\langle\mathcal R_u,\varphi\rangle
 			={}&
 			-\alpha\sum_i\lambda_i
 			\int_{C_i}h^{-\alpha-1}(y)
 			\langle a_i,u\rangle\varphi(y)\,dy\\
 			&+
 			\sum_{\substack{[a_i,a_j]\text{ edge}\\
 					[a_i,a_j]\not\parallel u}}
 			(\lambda_j-\lambda_i)\langle\nu_{ij},u\rangle
 			\int_{C_{ij}}h^{-\alpha}(y)\varphi(y)\,dy.
 		\end{align*}
 		For an edge parallel to $u$, we have $\nu_{ij}=u$ and
 		$C_{ij}\subset H$. Hence
 		\begin{equation}\label{eq:second-descent-decomposition}
 			\langle\partial_uT,\varphi\rangle
 			=
 			\langle J_u,\varphi|_H\rangle
 			+
 			\langle\mathcal R_u,\varphi\rangle.
 		\end{equation}

 	We claim that $J_u=0$. Let $\psi\in C_c^\infty(H)$ be non-negative.
 	Choose $\chi\in C_c^\infty((-1,1))$ such that
 	\[
 	\chi\geq0,
 	\qquad
 	\int_{\mathbb R}\chi(s)\,ds=1.
 	\]
 	For $x\in\mathbb R^d$, write
 	\[
 x=s u+\eta,
 	\qquad s\in\mathbb R,\quad \eta\in H=u^\perp,
 	\]
 	so that
 	\[
 	s=\langle x,u\rangle.
 	\]
 	For $t\in\mathbb R$, define
 	\[
 	\Phi_t(x)
 	=
 	\chi(s-t)\psi(\eta).
 	\]

 	 Using properties of the Fourier transform, we obtain
 		\begin{equation}\label{eq:second-descent-positive}
 		\left\langle
 		\widehat T,\langle\,\cdot\,,u\rangle\Phi_t
 		\right\rangle
 		= 	\left\langle
 		 T,\widehat{\langle\,\cdot\,,u\rangle\Phi_t}
 		\right\rangle
	=
\left\langle
T, i\partial_u \widehat{\Phi_t}
\right\rangle
 		=
 		\left\langle
 		-i\partial_uT,\widehat{\Phi_t}
 		\right\rangle.
 	\end{equation}
  	Since $\operatorname{supp}\chi\subset(-1,1)$, on
 	$\operatorname{supp}\Phi_t$ we have
 	\[
 	|s-t|<1.
 	\]
 	Hence $s=\langle x,u\rangle>0$ when $t>1$, whereas $s=\langle x,u\rangle<0$ when $t<-1$.
 	Since $\Phi_t\geq0$ and $\widehat T$ is a positive distribution, it
 	follows that
 	\begin{align*}
 		\left\langle
 		\widehat T,\langle\,\cdot\,,u\rangle\Phi_t
 		\right\rangle
 		\geq0,&
 		\quad \mathrm{ when }\,\, t>1,\\
 		 	\left\langle
 		\widehat T,\langle\,\cdot\,,u\rangle\Phi_t
 		\right\rangle
 		\leq0,&
 		\quad \mathrm{ when }\,\,  t<-1.
 	\end{align*}

 		Writing $y=\tau u+z$, with $\tau\in \mathbb R$ and  $z\in H$, gives
 		\begin{equation*}
 			\widehat{\Phi_t}(\tau u+z)
 			=
 			e^{-it\tau}\widehat\chi(\tau)\widehat\psi(z).
 		\end{equation*}
 		Since $\widehat\chi(0)=1$,
 		\[
 		\widehat{\Phi_t}|_H=\widehat\psi.
 		\]
 		Applying \eqref{eq:second-descent-decomposition} to
 		$\varphi=\widehat{\Phi_t}$, we obtain
 		\[
 		\langle\partial_uT,\widehat{\Phi_t}\rangle
 		=
 		\langle J_u,\widehat\psi\rangle
 		+
 		\langle\mathcal R_u,\widehat{\Phi_t}\rangle.
 		\]
 		Multiplying by $-i$ and using
 		\eqref{eq:second-descent-positive}, we get
 		\begin{equation}\label{mult_by-i}
 		\left\langle
 		\widehat T,\langle\cdot,u\rangle\Phi_t
 		\right\rangle
 		=
 		-i\langle J_u,\widehat\psi\rangle
 		-
 		i\langle\mathcal R_u,\widehat{\Phi_t}\rangle.
 		\end{equation}

 		We next show that
 		\begin{equation}\label{eq:second-remainder-vanishes}
 			\langle\mathcal R_u,\widehat{\Phi_t}\rangle
 			\to0
 			\qquad\mbox{as } |t|\to\infty.
 		\end{equation} 		
 		The full-dimensional part of $\mathcal R_u$ has density
 		\[
 		B(y)
 		=
 		-\alpha\sum_i\lambda_i h^{-\alpha-1}(y)
 		\langle a_i,u\rangle\ind_{C_i}(y).
 		\]
 		Its action on $\widehat{\Phi_t}$ is
 		\[
 		\int_{\mathbb R^d}
 		e^{-it\langle y,u\rangle}A_0(y)\,dy
 		=
 		\widehat{A_0}(tu),
 		\]
 		where
 		\[
 		A_0(y)
 		=
 		B(y)
 		\widehat\chi(\langle y,u\rangle)
 		\widehat\psi(P_Hy).
 		\]
 		The function $A_0$  belongs to $ L^1(\mathbb R^d)$,  since $|A_0(y)|=O(|y|^{-\alpha-1})$ near the origin, and  it is  rapidly decreasing at infinity because of the two Schwartz factors. The
 		Riemann--Lebesgue lemma gives
 		\[
 		\widehat{A_0}(tu)\to0,
 		\qquad\mbox{as } |t|\to\infty.
 		\]
 		
 		Now consider the part of $\mathcal R_u$ that contains a wall
 		\[
 		C_{ij}\subset V_{ij}:=\nu_{ij}^{\perp}
 		\]
 		corresponding to an edge not parallel to $u$, and put
 		\[
 		b_{ij}=P_{V_{ij}}u.
 		\]
 		Then $b_{ij}\neq0$; otherwise $u$ would be parallel to $\nu_{ij}$, and
 		hence $[a_i,a_j]$ would be parallel to $u$.
 		
 		Put
 		$$
 			A_{ij}(y)
 			={}
 			(\lambda_j-\lambda_i)\langle\nu_{ij},u\rangle
 			\ind_{C_{ij}}(y)h^{-\alpha} (y)
 			\widehat\chi(\langle y,u\rangle)
 			\widehat\psi(P_Hy).
 $$
 		Since $y\in V_{ij}$,
 		\[
 		\langle y,u\rangle=\langle y,b_{ij}\rangle,
 		\]
 		and the corresponding wall contribution is
 		\begin{equation}\label{FTAij}
 		\int_{V_{ij}}
 		e^{-it\langle y,b_{ij}\rangle}A_{ij}(y)\,dy
 		=
 		\widehat{A_{ij}}(tb_{ij}),
 		\end{equation}
 		where the Fourier transform on the right is taken in  
 	$V_{ij}$.

 		Again, observe that $A_{ij}\in L^1(V_{ij})$.  Since $b_{ij}\neq0$, the
 		Riemann--Lebesgue lemma on $V_{ij}$ shows that \eqref{FTAij} tends to zero as $|t|\to \infty$.
 		There are only finitely many walls, and
 		\eqref{eq:second-remainder-vanishes} follows.
 		
 		Set
 		\[
 	A_t=
 	\left\langle
 	\widehat T,\langle\cdot,u\rangle\Phi_t
 	\right\rangle.
 		\]
 		By \eqref{mult_by-i} and
 		\eqref{eq:second-remainder-vanishes},
 		\[
 		A_t
 		=
 		-i\langle J_u,\widehat\psi\rangle+o(1),
 		\qquad\mbox{as }|t|\to\infty.
 		\]
 		The numbers $A_t$ are real. Moreover,
 		\[
 		A_t\geq0\quad(t>1),
 		\qquad
 		A_t\leq0\quad(t<-1).
 		\]
 		Taking the limits $t\to+\infty$ and $t\to-\infty$ therefore gives
 		\[
 		-i\langle J_u,\widehat\psi\rangle=0,
 		\]
 		and hence
 		\begin{equation}\label{eq:J}
 		\langle\widehat{J_u},\psi\rangle
 		=
 		\langle J_u,\widehat\psi\rangle
 		=
 		0
 		\end{equation}
 		for every non-negative $\psi\in C_c^\infty(H)$.
 		
 		Observe that every test function in $C_c^\infty(H)$ is a complex linear
 		combination of non-negative test functions. Indeed, if $\varphi\in C_c^\infty(H)$ is real-valued, choose
 		a non-negative $\zeta\in C_c^\infty(H)$ with $\zeta=1$ on the support of
 		$\varphi$, and take
 		$M\geq\|\varphi\|_\infty$. Then
 		\[
 		\varphi=(\varphi+M\zeta)-M\zeta,
 		\]
 		and both terms on the right are non-negative. Complex-valued test
 		functions are handled by separating real and imaginary parts.
 		
 		It follows from \eqref{eq:J} that
 		$\widehat{J_u}=0$, and hence, 
 		\(
 		J_u=0.
 		\)

 		We now show that the coefficients $\lambda_i$ are all equal. The relative interiors of the cones $C_{ij}$, as $[a_i,a_j]$ ranges
 		over the edges of $Q$ parallel to $u$, are pairwise disjoint.
 		Fix one such edge $[a_i,a_j]$ and choose a nonzero function
 		\[
 		\varphi\in C_c^\infty
 		\bigl(\operatorname{relint} C_{ij}\bigr),
 		\qquad \varphi\geq0.
 		\]
 		Since $J_u=0$, the definition \eqref{eq:Ju} gives
 		\[
 		0=\langle J_u,\varphi\rangle
 		=
 		(\lambda_j-\lambda_i)
 		\int_{C_{ij}}h(y)^{-\alpha}\varphi(y)\,dy.
 		\]
 		The integral on the right is strictly positive. Hence
 		\[
 		\lambda_i=\lambda_j.
 		\]
 		
 		Thus $\lambda_i=\lambda_j$ for every edge $[a_i,a_j]$ parallel to
 		$u$. Since $u$ was an arbitrary edge direction of $Q$, the same
 		argument applies to every edge of $Q$. Therefore
 		\[
 		\lambda_i=\lambda_j
 		\]
 		whenever $a_i$ and $a_j$ are joined by an edge. Since the edge graph
 		of $Q$ is connected, all the coefficients $\lambda_1,\ldots,\lambda_M$
 		are equal.

 \end{proof}

\noindent\textbf{Step 4.}
We show that every two-dimensional face of \(K\) is centrally symmetric, and hence $K$ is a zonotope.
Let \(F\) be a two-dimensional face of \(K\), let \(e\) be an edge of
\(F\), and choose a unit vector \(g\) parallel to \(e\). Set
\[
H=g^\perp,
\qquad
Q=P_HK.
\]
Let \(a_1,\ldots,a_M\) be the vertices of \(Q\), and let \(\lambda_i\)
denote the length of the fiber
\[
K\cap(a_i+\mathbb Rg),
\]
as in \eqref{eq:T-chambers}.

Applying Lemma~\ref{lem:no-jumps} with
\[
d=n-1,
\qquad
\alpha=1-p,
\]
and using
\[
-n+3<p<1
\quad\Longleftrightarrow\quad
0<\alpha<d-1,
\]
we obtain
\begin{equation}\label{eq:constant}
	\lambda_1=\cdots=\lambda_M.
\end{equation}

Choose \(y\in\operatorname{relint}N_K(F)\), so that \(K^y=F\).
Since \(g\) is parallel to \(F\), we have \(y\perp g\), and hence
\(y\in H\). Therefore
\[
Q^y=(P_HK)^y=P_H(K^y)=P_HF.
\]
Since  
\(\ker P_H=\mathbb Rg\),  
\(P_HF\) is one-dimensional, and therefore it is an edge of \(Q\).
Denote its endpoints by \(a\) and \(b\).

Since \(e\) is an edge of the polygon \(F\) parallel to \(g\),
projection along \(g\) maps \(e\) to one of the endpoints of the
segment \(P_HF\). Relabeling \(a\) and \(b\) if necessary, we may write
\[
e=F\cap(a+\mathbb Rg).
\]
For \(v\in\{a,b\}\), put
\[
J_v=K\cap(v+\mathbb Rg).
\]
If \(x\in J_v\), then \(P_Hx=v\in Q^y\). Since \(y\in H\),
\[
\langle x,y\rangle
=
\langle P_Hx,y\rangle
=
\langle v,y\rangle
=
h_Q(y)
=
h_K(y).
\]
Hence \(x\in K^y=F\), and therefore
\[
J_a,J_b\subset F.
\]
It follows that
\[
J_a=e,
\qquad
J_b=F\cap(b+\mathbb Rg).
\]
Since \(a\) and \(b\) are vertices of \(Q\), \eqref{eq:constant} gives
\[
\operatorname{length}(J_b)
=
\operatorname{length}(J_a)
=
\operatorname{length}(e)>0.
\]
Since \(b\) is an endpoint of \(P_HF\), the fiber
\(F\cap(b+\mathbb Rg)\) is a face of \(F\). As it has positive length,
it is an edge of \(F\). Thus \(J_b\) is an edge of \(F\), parallel to
\(e\), and of the same length.

Consequently, every edge of \(F\) has a parallel edge of the same
length. It follows that the edge vectors of \(F\), in cyclic order,
occur in opposite pairs. Hence all pairs of opposite vertices have
the same midpoint, and therefore \(F\) is centrally symmetric. 
Since \(F\) was arbitrary, every two-dimensional face of \(K\) is
centrally symmetric. By the classical characterization of zonotopes,
\(K\) is a zonotope; see \cite[Theorem~3.5.2]{Schneider}. This
completes the proof of Theorem~\ref{thm:main}.

\section{Application of Theorem~\ref{thm:main-intro} to hyperplane sections }\label{sec:section-lifting}

Let 
\(H\subset\mathbb R^n\) be a linear subspace. Recall that  \(\mathcal I_p(H)\) denotes the
class of convex bodies in  \(H\) that are unit balls of finite-dimensional normed spaces that embed in $L_p$, $p>-\mathrm{dim}\, H$. As was mentioned in the introduction, 
if $K$ is an origin-symmetric convex body in $\mathbb R^n$ that belongs to $\mathcal I_p$ for $p>-n+1$, then $K\cap H$ belongs to $\mathcal I_p(H)$ for each $(n-1)$-dimensional subspace $H\subset \mathbb R^n$. For $p>0$ this follows from the definition of embedding in $L_p$, for $p<0$ this was shown in \cite{Rubin} (see also \cite{Milman2006}), and for $p=0$ this can be obtained from the following  argument. If $K\in \mathcal I_0$, then $K\in \mathcal I_p$ for all $-n+1<p<0$. Therefore $K\cap H\in \mathcal I_p(H)$ for all $-n+1<p<0$ and all $(n-1)$-dimensional subspaces $H$. Hence,  $K\cap H\in \mathcal I_0(H)$ for   all $(n-1)$-dimensional subspaces $H$. For details,  see \cite{KaltonKoldobskyYaskinYaskina}.

On the other hand, it was shown by Neyman \cite{Neyman} 
that, for each $1\le p<\infty$, $p\ne 2$, there are $n$-dimensional normed spaces that do not embed in $L_p$, but all their
$(n-1)$-dimensional subspaces embed in $L_p$.  This result was extended to $\mathcal I_{-1}$ by 
Yaskina \cite{Yaskina}, who constructed an example of an origin-symmetric convex body in $\mathbb R^n$, $n\ge 5$, that is not an intersection body, but all its central sections are intersection bodies. The latter was further generalized to $k$-intersection bodies in \cite{Yaskin2008}.

 The goal of this section is to show that if $K$ is a polytope, then the reverse  implication also holds: $$K\cap H\in \mathcal I_p(H), \quad \forall H \implies K\in \mathcal I_p,$$
for an appropriate range of $p$.

Let
\(D^{\circ_H}\) denote the polar of a convex body \(D\subset H\), taken in
the Euclidean space \(H\). 
We first recall the relation between sections and projections under
polarity; see, for example, \cite[p.~22]{Gardner}.

\begin{lemma}\label{lem:section-polar-projection}
	Let \(K\subset\mathbb R^n\) be an origin-symmetric convex body, and let
	\(H\subset\mathbb R^n\) be a linear subspace. Then
	\[
	(K\cap H)^{\circ_H}=P_HK^\circ,
	\]
	where \(P_H\) is the orthogonal projection onto \(H\).
\end{lemma}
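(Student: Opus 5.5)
We compare the support functions of the two bodies, since a convex body in \(H\) is determined by its support function on \(H\). Let \(M=K^\circ\). The first ingredient is the elementary identity \(h_{P_HM}(y)=h_M(y)\) for \(y\in H\). It holds because
\[
\max_{x\in M}\langle P_Hx,y\rangle=\max_{x\in M}\langle x,P_Hy\rangle=\max_{x\in M}\langle x,y\rangle .
\]
The second ingredient is the duality \(h_{K^\circ}=\|\cdot\|_K\), which was recalled in Section~\ref{sec:definitions}. Combining the two gives \(h_{P_HK^\circ}(y)=\|y\|_K\) for all \(y\in H\).

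Next, for \(y\in H\) we have \(\|y\|_{K\cap H}=\|y\|_K\). Indeed, \(y\in\lambda K\) if and only if \(y\in\lambda(K\cap H)\), because \(\lambda K\cap H=\lambda(K\cap H)\). The body \(K\cap H\) is an origin-symmetric convex body in \(H\) containing the origin in its relative interior. So the duality \(h_{D^{\circ_H}}=\|\cdot\|_D\), applied inside the Euclidean space \(H\), gives \(h_{(K\cap H)^{\circ_H}}(y)=\|y\|_{K\cap H}=\|y\|_K\) for \(y\in H\).

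Both \((K\cap H)^{\circ_H}\) and \(P_HK^\circ\) are compact convex subsets of \(H\), and their support functions agree on \(H\). Hence they coincide.

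The only point needing any care is the duality \(h_{D^\circ}=\|\cdot\|_D\) inside \(H\). It follows from the bipolar theorem \(D^{\circ\circ}=D\) in \(H\), or can be verified directly from the definition of the polar using homogeneity. Beyond this the argument is routine, so I expect no real obstacle.
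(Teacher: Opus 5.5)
Your argument is correct and is the standard one: the paper gives no proof of this lemma, only a reference to \cite[p.~22]{Gardner}, and combining $h_{P_HM}=h_M$ on $H$ with $h_{D^\circ}=\|\cdot\|_D=1/\rho_D$, as you do, is the usual derivation. One small correction: homogeneity alone gives only $h_{D^\circ}\le\|\cdot\|_D$, and the reverse inequality needs a supporting hyperplane (equivalently the bipolar theorem); also, Section~\ref{sec:definitions} recalled the dual identity $\|\cdot\|_{K^\circ}=h_K$, which yields yours only after applying $K^{\circ\circ}=K$.
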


We will also need the following geometric fact.

\begin{lemma}\label{lem:hyperplane-projections-zonotope}
	Let \(K\subset\mathbb R^n\), \(n\geq4\), be a full-dimensional
	polytope. Suppose that \(P_HK\) is a zonotope for every $(n-1)$-dimensional subspace \(H\subset\mathbb R^n\). Then \(K\) is a zonotope.
\end{lemma}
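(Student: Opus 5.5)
The plan is to use the same criterion as in Step~4 of the proof of Theorem~\ref{thm:main}: a polytope is a zonotope if and only if all its two-dimensional faces are centrally symmetric \cite[Theorem~3.5.2]{Schneider}. So I fix a two-dimensional face \(F\) of \(K\) and aim to find a hyperplane \(H\) such that \(P_H\) maps \(F\) affinely and injectively onto a two-dimensional face of \(P_HK\). Since \(P_HK\) is a zonotope by hypothesis, all its two-dimensional faces are centrally symmetric. An injective affine image of \(F\) is centrally symmetric exactly when \(F\) is, so \(F\) is then centrally symmetric and the lemma follows.

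To choose \(H\), take \(y\in\relint N_K(F)\), so that \(K^y=F\). Let \(L=\operatorname{span}(F-F)\), which is two-dimensional and lies in \(y^\perp\). The normal cone \(N_K(F)\) has dimension \(n-2\) and its linear span is \(L^\perp\). I will take \(H=w^\perp\) with a unit vector \(w\) satisfying three conditions: \(w\perp y\), so that \(y\in H\); \(w\notin L\), so that \(P_H\) is injective on \(L\) and hence on \(F\); and \(w\) is generic otherwise. Such \(w\) exist because \(y^\perp\) has dimension \(n-1\ge3>2=\dim L\); this is where the hypothesis \(n\ge4\) is used.

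The key identity is the one already used in Step~4: since \(y\in H\),
\[
(P_HK)^y=P_H(K^y)=P_HF .
\]
This holds because \(\langle P_Hx,y\rangle=\langle x,y\rangle\) for every \(x\), so \(h_{P_HK}(y)=h_K(y)\), and a point \(P_Hx\) attains this maximum if and only if some preimage \(x\in K\) does. The preimage can be taken to be \(x\) itself, since \(\langle x,y\rangle\) depends only on \(P_Hx\). Hence \(P_HF\) is a face of \(P_HK\). By injectivity, \(P_HF\) is a two-dimensional polygon affinely equivalent to \(F\).

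The step I expect to need the most care is checking that the exposed-face identity is applied correctly: every \(x\in K\) with \(P_Hx\in P_HF\) should lie in \(F\), and there should be no collapse in dimension. Collapse is excluded by \(w\notin L\). For the identity itself, the argument above suffices: if \(\langle P_Hx,y\rangle=h_K(y)\) then \(\langle x,y\rangle=h_K(y)\), so \(x\in F\). No further genericity is required, because a face of a zonotope is a zonotope and hence centrally symmetric. With \(P_HF\) identified as a two-dimensional face of the zonotope \(P_HK\), central symmetry of \(F\) follows, and the characterization of zonotopes finishes the proof.
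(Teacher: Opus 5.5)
Your proof is correct and follows the paper's argument. You expose the two-face $F$ by a vector $y$ lying in $H$, use $(P_HK)^y=P_H(K^y)=P_HF$, and invoke the characterization of zonotopes by centrally symmetric two-faces. The only difference is in the choice of $H$: the paper takes $H\supset \operatorname{lin}(F-F)+\mathbb R y$, so $P_H$ acts as a translation on $\operatorname{aff}F$, whereas you only require $w\perp y$ and $w\notin L$, so $P_H$ is an injective affine map on $\operatorname{aff}F$, which still preserves central symmetry.
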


\begin{proof}
	Let \(F\) be a two-dimensional face of \(K\), and let
	\[
	E=\operatorname{lin}(F-F)
	\]
		be the two-dimensional linear subspace parallel to the affine hull of $F$.
	Choose \(z\neq 0\) such that \(F=K^z\). 
	Since \(F=K^z\), the linear functional
	\(x\mapsto\langle x,z\rangle\) is constant on \(F\). Hence
	\(\langle x-y,z\rangle=0\) for all \(x,y\in F\), and therefore
	\(E\subset z^\perp\).
Because 
	\(
	\dim(E+\mathbb R z)=3
	\)
	and \(n\geq4\), there is an \((n-1)\)-dimensional subspace
	\(H\) containing \(E+\mathbb R z\).
	
	Since \(z\in H\), orthogonal projection commutes with taking the
	exposed face in direction \(z\):
	\[
	(P_HK)^z=P_H(K^z)=P_HF.
	\]
	Moreover, if \(x_0\in F\), then for every \(w\in E\),
	\[
	P_H(x_0+w)=P_Hx_0+w,
	\]
	because \(E\subset H\). Thus \(P_H\) restricts to a translation on
	\(\operatorname{aff}F=x_0+E\). Consequently, \(P_HF\) is a
	two-dimensional face of \(P_HK\) and is a translate of \(F\).
	
	By hypothesis, \(P_HK\) is a zonotope, so its two-dimensional face
	\(P_HF\) is centrally symmetric. Therefore \(F\) is centrally
	symmetric. 	
	Thus every two-dimensional face of \(K\) is centrally symmetric.
	By the classical characterization of zonotopes,
	\(K\) is a zonotope; see
	\cite[Theorem~3.5.2]{Schneider}.
\end{proof}

We can now prove the main  result of this section.

\begin{theorem}
	\label{thm:section-lifting}
	Let \(K\subset\mathbb R^n\), \(n\geq4\), be an origin-symmetric
	full-dimensional polytope, and let
	\[
	-n+4<p\leq1.
	\]
	Then the following conditions are equivalent:
	\begin{enumerate}
		\item
		\(
		K\cap H\in\mathcal I_p(H)
		\)
		for every linear hyperplane \(H\subset\mathbb R^n\);
		
		\item
		\(
		K\in\mathcal I_p;
		\)
		
		\item
		\(
		K\in\mathcal I_1;
		\)
		
		\item \(K^\circ\) is a zonotope.
	\end{enumerate}
\end{theorem}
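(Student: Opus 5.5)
We prove the cycle of implications \((4)\Rightarrow(3)\Rightarrow(2)\Rightarrow(1)\Rightarrow(4)\). The first three are already in hand. The equivalence \((3)\Leftrightarrow(4)\) for polytopes was discussed in Section~\ref{sec:definitions}: \(K\in\mathcal I_1\) if and only if \(K^\circ\) is a zonoid, and a polytopal zonoid is a zonotope. The implication \((3)\Rightarrow(2)\) is the inclusion \eqref{I1inIp}, since \(p\le 1\) and \(p>-n+4>-n\). The implication \((2)\Rightarrow(1)\) is the section property recalled at the start of this section. It is valid because \(p>-n+4>-n+1\); for \(p>0\) it follows from the definition, for \(p<0\) from \cite{Rubin}, and for \(p=0\) from the argument via \(\mathcal I_q\), \(q<0\), given above.

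The substantive step is \((1)\Rightarrow(4)\). Fix a hyperplane \(H\). Then \(K\cap H\) is an origin-symmetric full-dimensional polytope in \(H\), and \(\dim H=n-1\ge3\). The hypothesis \(-n+4<p\) reads \(-(n-1)+3<p\), and together with \(p\le1\) this puts us in the range of Corollary~\ref{cor:polyhedral} in dimension \(n-1\). At the endpoint \(p=1\) the hypothesis is already membership in \(\mathcal I_1(H)\), so that case is trivial. Corollary~\ref{cor:polyhedral}, applied inside the Euclidean space \(H\), gives that \((K\cap H)^{\circ_H}\) is a zonotope. The case \(p=0\) is covered by that corollary as well. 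By Lemma~\ref{lem:section-polar-projection}, \((K\cap H)^{\circ_H}=P_HK^\circ\). Hence \(P_HK^\circ\) is a zonotope for every \((n-1)\)-dimensional subspace \(H\).

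Now \(K^\circ\) is a full-dimensional polytope, since \(K\) is an origin-symmetric full-dimensional polytope. Because \(n\ge4\), Lemma~\ref{lem:hyperplane-projections-zonotope} applies and shows that \(K^\circ\) is a zonotope, which is (4).

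I expect no real obstacle; the only care needed is bookkeeping. One must check that the dimension shift \(n\to n-1\) matches the range \(-n+4<p\) exactly. One must also check that Corollary~\ref{cor:polyhedral} is applied intrinsically in \(H\), with the polar taken in \(H\), so that it matches Lemma~\ref{lem:section-polar-projection}.
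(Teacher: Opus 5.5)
Your proposal is correct and follows the paper's proof. For $(1)\Rightarrow(4)$ the paper also applies the $(n-1)$-dimensional main result in $H$: it cites Theorem~\ref{thm:main-intro} to get $K\cap H\in\mathcal I_1(H)$ and then uses that a polytopal zonoid is a zonotope, which is what your use of Corollary~\ref{cor:polyhedral} packages. It then finishes exactly as you do, with Lemma~\ref{lem:section-polar-projection} and Lemma~\ref{lem:hyperplane-projections-zonotope}.
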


\begin{proof}
	The equivalence of (2), (3), and (4)  was shown above. As was mentioned in the beginning of Section \ref{sec:section-lifting}, (2) implies (1).

	Finally, assume that (1) holds. Fix a hyperplane
	\(H\subset\mathbb R^n\). Since \(\dim H=n-1\), the condition on \(p\)
	can be written as
	\[
	p>-n+4=-(n-1)+3.
	\]
Thus, Theorem~\ref{thm:main-intro}, applied in the Euclidean
	space \(H\), gives
	\[
	K\cap H\in\mathcal I_1(H).
	\]
 Consequently,
	\[
	(K\cap H)^{\circ_H}
	\]
	is a zonoid. Since it is also a polytope, it is a zonotope. Using
	Lemma~\ref{lem:section-polar-projection}, we obtain
	\[
	P_HK^\circ=(K\cap H)^{\circ_H},
	\]
	so every hyperplane projection of \(K^\circ\) is a zonotope.
	Lemma~\ref{lem:hyperplane-projections-zonotope} now implies
	that \(K^\circ\) is a zonotope. Thus, (1) implies (4), and the proof is complete.
\end{proof}

\noindent {\bf AI disclosure.} 
Lemma~\ref{lem:no-jumps} was obtained with the assistance of ChatGPT. ChatGPT was also used to proofread the manuscript and to improve its
exposition and wording.

\end{document}